\newtheorem{theorem}{Theorem}[section]
\newtheorem{proposition}[theorem]{Proposition}
\newtheorem{lemma}[theorem]{Lemma}
\newtheorem{remark}[theorem]{Remark}
\newtheorem{algo}[theorem]{Algorithm}
\def\ts{\thinspace}
\DeclareMathOperator{\diag}{diag}
\renewcommand{\epsilon}{\varepsilon}
\newcommand{\eps}{\varepsilon}
\newcommand{\R}{\mathbb{R}}
\newcommand{\bigo}{\mathcal{O}}
\def \IR{\mathbb R}
\def \N{\mathbb N}
\def \eps{\epsilon}
\def \L{\mathcal{L}}
\def \C{\mathcal{C}}
\renewcommand{\R}{\mathbb{R}}
\newcommand{\dif}[2]{\frac{\partial #1}{\partial #2}}
\title{
	Explicit stabilized integrators for stiff optimal control problems
}
\author{
	Ibrahim Almuslimani\textsuperscript{1} and Gilles Vilmart\textsuperscript{1}
}
\begin{document}
	\maketitle
	\footnotetext[1]{Universit\'e de Gen\`eve, Section de math\'ematiques, 2-4 rue du Li\`evre, CP 64, CH-1211 Gen\`eve 4, Switzerland, Ibrahim.Almuslimani@unige.ch, Gilles.Vilmart@unige.ch}
	\begin{abstract}
				Explicit stabilized methods are an efficient alternative to implicit schemes for the time integration of stiff systems of differential equations in large dimension. In this paper we derive explicit stabilized integrators of orders one and two for the optimal control of stiff systems. We analyze their favorable stability properties based on the continuous optimality conditions. { Furthermore, we study their order of convergence taking advantage of the symplecticity of the corresponding partitioned Runge-Kutta method involved for the adjoint equations.}
			Numerical experiments including the optimal control of a nonlinear diffusion-advection PDE illustrate the efficiency of the new approach.
		
		\bigskip
		\noindent
		{\it Keywords:\,}
	optimal control, RKC, Chebyshev methods, symplectic methods, geometric integration, stability, adjoint control systems, double adjoint, Burgers equation, diffusion-advection PDE.
		\smallskip
		
		\noindent
		{\it AMS subject classification:\, 49M25, 65L04, 65L06}
		
	\end{abstract}

	
\section{Introduction}\label{sec:intro}
In this paper, we introduce and analyze numerical methods for the optimal control of systems of ordinary differential equations (ODEs) of the form
\begin{equation}\label{eq:P} 
	\min_u ~\Psi(y(T));\qquad \dot{y}(t):=\frac{dy}{dt}(t)=f(u(t),y(t)),\quad t\in [0,T];~~y(0)=y^0,
\end{equation}
where for a fixed final time $T>0$ and a given initial condition $y^0\in \R^n$, the function $y:[0,T]\to \IR^n$ is the unknown state function, $u:[0,T]\to \IR^m$ is the unknown control function.
Here, $f:\R^m \times \R^n \to \R^n$ is the given vector field and 
$\Psi:\R^n \to \R$ is the given cost function, which are assumed to be $C^\infty$ mappings. 
For simplicity of the presentation, we consider the case of autonomous problems (with $f$ independent of time) but we highlight that our approach also applies straightforwardly to non-autonomous problems $\frac{dy}{dt}(t)=f(t,u(t),y(t))$.\footnote[2]{A standard approach is to consider the augmented system with $z(t)=t$, i.e. $\frac{dz}{dt}=1,~z(0)=0$ and define $\tilde{y}(t)=(y(t),z(t))^T$, see e.g. \cite[Chap.\ts III]{HLW06} for details.}

There are essentially two approaches for the numerical solution of optimal control problems: the direct approach, which consists in directly discretizing \eqref{eq:P} and then applying a minimization method to the corresponding discrete minimization problem, and the indirect approach, which is based on Pontryagin's maximum principle, taking benefit of continuous optimality conditions (adjoint equation).
A natural approach for the accurate numerical approximation of such optimal control problem \eqref{eq:P} is to consider Runge-Kutta type schemes. It was shown in \cite[Theorem\ts4.1]{Hager00} by studying the continuous and discrete optimality conditions that additional order conditions for the convergence rate are required in general by Runge-Kutta methods when applied to optimal control problems, compared to the integration of standard initial valued ordinary differential equations and conditions up to order $4$ were derived. In \cite{BL06}, general order conditions were derived, in addition to identifying symplecticity properties. This result is related to the order of symplectic partitioned Runge-Kutta methods, and it implies in particular that applying naively a Runge-Kutta method to \eqref{eq:P} yields  in general an order reduction phenomenon.
{  This analysis was then extended to other classes of Runge-Kutta type schemes in \cite{Kay10, LaV13, HPS13}, see also \cite{HeS11,AHP19} in the context of hyperbolic problems and multistep methods.} The use of symplectic integrators is motivated by the recent publication \cite{LF19} which proves the convergence of  forward-backward iterative algorithm { (Algorithm \ref{algo:iter} in the present paper)}, to implement discretized optimal control problems, when using a symplectic Runge-Kutta method. The work done in \cite{LF19} generalizes that of \cite{LCTE18} in which the authors prove the global convergence of the algorithm in the continuous time case. We also mention the paper \cite{Wal07} where automatic differentiation can be efficiently applied for computing the gradient of the cost function {  under the assumption that optimal control order conditions are satisfied.} In our algorithms, the Jacobian of the vector field is given as an input, however the idea of automatic differentiation could be coupled with our approach to compute derivatives automatically, {  but this is not the purpose of the present paper.}

In the case where the vector field $f$ in \eqref{eq:P} is stiff, due for instance to the multiscale nature of the model, or due to the spatial discretization of a diffusion operator in a partial differential equation (PDE) model, standard explicit integrators face in general a severe time step restriction making standard explicit methods unreasonable to be used due to their dramatic cost. A standard approach in this stiff case is to consider indirect implicit methods with good stability properties, as studied in \cite{HPS13} in the context of implicit-explicit (IMEX) Runge-Kutta methods for stiff optimal control problems. Note however that, already for initial value ODEs, such implicit methods can become very costly for nonlinear stiff problems in large dimension, requiring the usage of Newton-type methods and sophisticated linear algebra tools (preconditioners, etc.).
Alternatively to using implicit methods, in this paper we focus on fully explicit indirect methods, and introduce new families of explicit stabilized methods for stiff optimal control problems. The proposed methods rely on the so-called Runge-Kutta-Chebyshev methods of order one and its extension RKC of order two \cite{HoS80}.
Such explicit stabilized methods are popular in the context of initial value problems of stiff differential equations, particularly in high dimensions in the context of diffusive PDEs, see e.g. the survey \cite{Abd13c}.
It was extended to the stochastic context first in \cite{AbC08,AbL08} and recently in \cite{AAV18} for the design of explicit stabilized integrators with optimally large stability domains in the context of mean-square stable stiff and ergodic problems.

This paper is organized as follows. In Section \ref{sec:pre}, we recall standard tools on explicit stabilized methods and classical results on standard Runge-Kutta methods applied to optimal control problems.
In Section \ref{sec:dasection}, we introduce the new explicit stabilized schemes for optimal control problems and analyze their convergence and stability properties. Finally, Section \ref{sec:num} is dedicated to the numerical experiments where we illustrate the efficiency of the new approach.

\section{Preliminaries}\label{sec:pre}
\subsection{Discretization, order conditions, and symplecticity}
%
Let us first recall the definition of a Runge-Kutta method for ordinary differential equations (ODEs),
\begin{equation} \label{eq:ode}
	\dot y(t) = F(y(t)), \qquad y(0)=y^0,
\end{equation}
where $y:[0,T]\to\IR^n$ is the unknown solution, $F:\IR^n\to\IR^n$ is a smooth vector field, and $y^0\in\IR^n$ is a given initial condition. 
We consider for simplicity a uniform discretization of the interval $[0,T]$ with $N+1$ points for $N\in\N$, and denote by $h=T/N$ the stepsize. For a given integer $s$ and given real coefficients $b_i,~a_{ij}~(i,j=1,\dots,s)$, an $s$-stage Runge-Kutta method, $y_k\approx y(t_k),~t_k=kh$, to approximate the solution of \eqref{eq:ode}, is defined, for all $k=0,\dots,N-1$, by
\begin{equation}
	\label{eq:rk}
	\begin{split}
		y_{ki}&=y_k+h\sum_{j=1}^{s}a_{ij}F(y_{kj}),\quad i=1,\dots,s,\qquad y_{k+1}=y_k+h\sum_{i=1}^{s}b_iF(y_{ki}).
	\end{split}
\end{equation} 
The coefficients are usually displayed in a Butcher tableau as follows
\begin{equation}\label{eq:butcher}
	\begin{array}
		{c|c}
		& a_{ij}\\
		\hline
		& b_i
	\end{array},
\end{equation}
and we will sometimes use the notation $(a_{ij},b_i)$.
For more details about the order conditions of Runge-Kutta methods in the context of initial value ODEs, we refer for example to the book \cite[Chap.\ts III]{HLW06}.
We denote by $y_{k+1}=\Phi_h(y_k)$ the numerical flow of \eqref{eq:rk}, while the time adjoint method $\Phi^*_h$ of $\Phi_h$ is the inverse map of the original method with reversed time step $-h$, i.e., $\Phi^*_h:=\Phi^{-1}_{-h}$ \cite[Sect.\ts II.3]{HLW06}.
We recall that the time adjoint of an $s$-stage Runge-Kutta method $(a_{ij},b_i)$ \eqref{eq:rk} is again an $s$-stage Runge-Kutta method with the same order of accuracy and its coefficients $(a^*_{ij},b^*_i)$ are given by
$
a^*_{ij}=b_{s+1-j}-a_{s+1-i,s+1-j} \text{ and } b^*_i=b_{s+1-i}, \text{ where } i,j=1,\dots,s.
$

If we discretize \eqref{eq:P} using a Runge-Kutta discretization as above we naturally get the following discrete optimization problem,
\begin{equation}\label{eq:dp} 
	\begin{split}
		\min ~~ &\Psi(y_N); \quad \text{ subject to: }\\
		y_{k+1}&=y_k+h\sum_{i=1}^{s}b_if(u_{ki},y_{ki}),\qquad y_{ki}=y_k+h\sum_{j=1}^{s}a_{ij}f(u_{kj},y_{kj}),
	\end{split}
\end{equation}
where $i=1\dots,s,~ k=0,\dots,N-1,~\text{ and } y_0=y^0$. We denote by $p_{ode}$ the order of accuracy of the method \eqref{eq:rk} applied to the ODE problem \eqref{eq:ode} and by $p_{oc}$ the order of the method \eqref{eq:dp} for solving the optimal control problem \eqref{eq:P}. Note that we always have $p_{oc} \leq p_{ode}$. In general, $p_{oc} < p_{ode}$ because additional order conditions, described in \cite{Hager00,BL06}, have to be satisfied.

Let us denote by $H(u,y,p):=p^Tf(u,y)$ the pseudo-Hamiltonian of the system where $p$ is the Lagrange multiplier (or the costate) associated to the state $y$. Applying Pontryagin's maximum (or minimum) principle, the first order optimality conditions of \eqref{eq:P} are given by the following boundary value problem,
\begin{equation}\label{eq:oc} 
	\begin{split}
		\dot y(t)&=f(u(t),y(t))=\nabla_pH(u(t),y(t),p(t)),\\
		\dot p(t)&=-\nabla yf(u(t),y(t))p= -\nabla_yH(u(t),y(t),p(t)),\\
		0&=\nabla_uH(u(t),y(t),p(t)).\\
		t&\in [0,T],\quad y(0)=y^0,\quad p(T)=\nabla \Psi(y(T)).
	\end{split}
\end{equation}
Applying a Runge-Kutta integrator { naively to \eqref{eq:oc}} as an initial value system of ODEs combined with the classical methodology of shooting methods, would lead to severe instability due to the forward in time integration of the costate equation. For instance, for the optimal control of a diffusion PDE problem such as  $\partial_t y(t,x) = \Delta y(t,x) +u(t,x)$, where $\Delta$ is the Laplace operator, $y(t,x)$ is the state function, and $u(t,x)$ is the control function  (see also the diffusion-convection PDE problem considered in Sect. \ref{sec:burger}), then the costate equation takes the form of a heat equation with the wrong sign, $\partial_t p(t,x) = - \Delta p(t,x)$, which is naturally unstable if integrated forward in time. This makes classical { shooting methods} not applicable in the context of stiff dissipative optimal control problems considered in this paper. Alternatively, we consider a forward-backward iterative algorithm as described below (Algorithm \ref{algo:iter}).

Introducing Lagrange multipliers for the finite dimensional optimization problem \eqref{eq:dp}, and supposing that $b_i\neq0$ for all $i=1\dots,s$, a calculation \cite{Hager00,BL06} yields the following discrete optimality conditions
\begin{equation}\label{eq:doc}
	\begin{split}
		y_{k+1} &= y_k+h\sum_{i=1}^{s}b_if(u_{ki},y_{ki}),\quad\ 
		y_{ki} = y_k+h\sum_{j=1}^{s}a_{ij}f(u_{kj},y_{kj}),\\
		p_{k+1} &= p_k-h\sum_{i=1}^{s}\hat{b}_i\nabla_yH(u_{ki},y_{ki},p_{ki}),~ 
		p_{ki} = p_k-h\sum_{j=1}^{s}\hat{a}_{ij}\nabla_yH(u_{kj},y_{kj},p_{kj}),\\
		~~0&=\nabla_uH(u_{ki},y_{ki},p_{ki}),\quad k=0,\dots,N-1\quad i=1,\dots,s,\\
		~~y_0&=y^0,\quad p_N=\nabla \Psi(y_N)
	\end{split}
\end{equation}	
where the coefficients $\hat{b}_i$ and $\hat{a}_{ij}$ are defined by the following relations which, as observed in \cite{BL06}, correspond to the symplecticity conditions of partitioned Runge-Kutta methods for ODEs,
\begin{equation}\label{eq:symp}
	\hat{b}_i:=b_i,\qquad \hat{a}_{ij}:=b_j-\frac{b_j}{b_i}a_{ji},\qquad i=1,\dots s, j=1,\dots,s.
\end{equation}
Note that the vectors $p_k$ and $p_{ki}$ are the Lagrange multipliers associated to $y_k$ and $y_{ki}$ respectively. 
Assuming that the Hessian matrix $\nabla_u^2H(u,y,p) \in \R^{m\times m}$ is invertible along the trajectory of the exact solution, by the implicit function theorem there exists a $C^\infty$ function $\phi$ such that $u=\phi(y,p)$ and then \eqref{eq:doc} is equivalent to a partitioned Runge-Kutta (PRK) method. As noticed in \cite{BL06}, if we consider the problem \eqref{eq:oc} as a Hamiltonian system, with the Hamiltonian function $\mathcal{H}(y,p):=H(\Psi(y,p),y,p)$, then the obtained PRK \eqref{eq:doc} scheme is symplectic thanks to the relations \eqref{eq:symp}.

\begin{theorem}[Theorem 4.1 in \cite{Hager00}]
	\label{th:hager}
	Consider a Runge-Kutta method $(a_{ij},b_i)$ of order $p_{ode}$ for ODEs, where $b_i\neq0$, for all $i=1,\ldots,s$, applied to the optimal control problem \eqref{eq:P}. Consider the optimality conditions \eqref{eq:oc}, and assume that $\nabla_u^2H(u,y,p)$ is invertible in a neighborhood of the solution, then we have the following theorem.
	If we discretize \eqref{eq:oc} using an $s$-stage partitioned Runge-Kutta method $(a_{ij},b_i)-(\hat a_{ij},\hat b_i)$ of order $p^{\star}_{ode}$ for ODEs (as partitioned RK method), and the condition \eqref{eq:symp} is satisfied, then the order $p_{oc}$ of \eqref{eq:dp} satisfies $p_{oc}=p^{\star}_{ode}\leq p_{ode}$ and the schemes \eqref{eq:dp} and \eqref{eq:doc} are equivalent. In particular, for $p_{ode}\geq2$, equivalently $\sum_{i=1}^{s}b_i=1$ and $\sum_{i,j=1}^{s}b_ia_{ij}=\frac12$, we get $p^{\star}_{ode}\geq2$ and $p_{oc}\geq2$.
	\label{th:diagram}
\end{theorem}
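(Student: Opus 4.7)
The plan is to proceed in three stages: (i) derive the discrete optimality conditions \eqref{eq:doc} directly from \eqref{eq:dp} by Lagrange multipliers, (ii) recognize the resulting system as a symplectic partitioned Runge--Kutta (PRK) scheme applied to the Hamiltonian boundary value problem encoded in \eqref{eq:oc}, and (iii) deduce $p_{oc}=p^\star_{ode}$ from standard PRK convergence theory, then check the order-two claim.

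For step (i), I would form the Lagrangian of the finite-dimensional problem \eqref{eq:dp} with multipliers $p_{k+1}$ for the main update constraints and $\lambda_{k_i}$ for the stage constraints. Setting the partial derivatives with respect to $y_k$, $y_{k_i}$ and $u_{k_i}$ to zero and introducing stage costates $p_{k_i}$ through a rescaling of the form $\lambda_{k_i}=h\, b_i\,(\,\cdot\,)$ (licit because $b_i\neq 0$), a routine calculation reorganizes the stationarity equations into \eqref{eq:doc} with the coefficients of \eqref{eq:symp}, and yields the terminal condition $p_N=\nabla\Psi(y_N)$ from the only unconstrained terminal term of the Lagrangian. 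This proves the equivalence of \eqref{eq:dp} and \eqref{eq:doc}.

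For step (ii), invertibility of $\nabla_u^2 H$ near the exact trajectory together with the implicit function theorem applied to $\nabla_u H(u,y,p)=0$ yields a smooth $u=\phi(y,p)$. The reduced Hamiltonian $\mathcal H(y,p):=H(\phi(y,p),y,p)$ satisfies, by the envelope identity $\nabla_u H=0$ at $u=\phi(y,p)$, the relations $\nabla_y\mathcal H=\nabla_y H$ and $\nabla_p\mathcal H=\nabla_p H$. Consequently \eqref{eq:oc} becomes the canonical Hamiltonian system $\dot y=\nabla_p\mathcal H$, $\dot p=-\nabla_y\mathcal H$ with boundary data $y(0)=y^0$, $p(T)=\nabla\Psi(y(T))$, and \eqref{eq:doc} (after eliminating $u_{k_i}=\phi(y_{k_i},p_{k_i})$) is precisely the PRK scheme $(a_{ij},b_i)-(\hat a_{ij},\hat b_i)$ applied to it. The identity \eqref{eq:symp} is exactly the standard symplecticity condition $\hat b_i=b_i$, $b_i\hat a_{ij}+\hat b_j a_{ji}=b_i\hat b_j$ for PRK methods, see \cite[Thm.\ts VI.4.6]{HLW06}.

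For step (iii), convergence of the PRK scheme of order $p^\star_{ode}$ applied to the smooth Hamiltonian BVP yields a global error of size $\bigo(h^{p^\star_{ode}})$ on $(y,p)$, and on $u$ via the smoothness of $\phi$; well-posedness of the shooting problem follows from the invertibility of $\nabla_u^2 H$. This gives $p_{oc}=p^\star_{ode}$, and $p^\star_{ode}\leq p_{ode}$ is immediate since the $y$-component of the PRK is the original RK method. For the final claim, $\hat b_i=b_i$ gives $\sum_i\hat b_i=1$ at once; summing \eqref{eq:symp} over $i,j$ and using the index swap $\sum_{i,j}b_j a_{ji}=\sum_{i,j}b_i a_{ij}=\tfrac12$ yields $\sum_{i,j}b_i\hat a_{ij}=\tfrac12$, and the two remaining second-order PRK conditions follow identically, so $p^\star_{ode}\geq 2$ and hence $p_{oc}\geq 2$. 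The main obstacle is step (iii): turning the ODE-order of the PRK method into a global error bound for the two-point boundary value problem requires a perturbation argument propagating the local truncation error across $[0,T]$, which relies on the invertibility of $\nabla_u^2 H$ and the regularity of $\phi$ to prevent blow-up of the linearized dynamics used in the shooting argument.
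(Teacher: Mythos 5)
The paper does not actually prove Theorem \ref{th:hager}: it is quoted from Hager \cite{Hager00} and Bonnans--Laurent-Varin \cite{BL06}, and the text surrounding the statement only sketches your steps (i) and (ii) --- the Lagrangian derivation of \eqref{eq:doc} from \eqref{eq:dp} with the rescaling made licit by $b_i\neq 0$, and the identification, via the implicit function theorem and $u=\phi(y,p)$, of \eqref{eq:doc} with a partitioned Runge--Kutta method applied to the reduced Hamiltonian system, with \eqref{eq:symp} being exactly the PRK symplecticity conditions. Those two steps of yours are correct and match the paper's own account, as does your verification of the second-order conditions at the end (using $\hat b_i=b_i$ and the index swap to get $\sum_{i,j}b_i\hat a_{ij}=1-\frac12=\frac12$).

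The genuine gap is step (iii), which is where essentially all of the mathematical content of the cited theorem lives, and which you name but do not carry out. Two specific problems. First, "standard PRK convergence theory" gives local accuracy and global convergence for \emph{initial value} problems; here \eqref{eq:oc} is a two-point boundary value problem whose discretization \eqref{eq:doc} couples a forward sweep in $y$ with a backward sweep in $p$ through the terminal condition $p_N=\nabla\Psi(y_N)$, and in Hager's proof the passage from consistency to a global $\bigo(h^{p^{\star}_{ode}})$ bound goes through an abstract stability result for the linearized optimality system whose invertibility (uniformly in $h$) rests on a coercivity/second-order sufficient optimality assumption, not merely on the invertibility of $\nabla_u^2H$; the latter only gives you the smooth elimination $u=\phi(y,p)$. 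Your appeal to "well-posedness of the shooting problem follows from the invertibility of $\nabla_u^2H$" therefore asserts exactly the thing that needs to be proved (and needs a stronger hypothesis than the one you invoke). Second, the theorem claims the \emph{equality} $p_{oc}=p^{\star}_{ode}$, so besides the lower bound you also need the upper bound, i.e.\ that the optimal-control order cannot exceed the ODE order of the associated symplectic PRK scheme; this is a separate residual/Taylor-expansion argument (it is precisely the source of the extra order conditions of \cite{Hager00,BL06} and of the order-reduction phenomenon the paper emphasizes) and is absent from your proposal.
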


The proof of Theorem \ref{th:diagram} relies on the  commutativity of the following diagram \cite[Sect.\ts2]{BL06} which means that  methods \eqref{eq:dp} and \eqref{eq:doc}, {color{red}where \eqref{eq:doc} is a symplectic partitioned Runge-Kutta method}, yield exactly the same outputs (up to round-off errors) if derived for Runge-Kutta discretizations of \eqref{eq:P} and \eqref{eq:oc} respectively. {  We also refer to the article \cite{SS16} where the role of symplectic partitioned Runge-Kutta methods involved in this commutative diagram is discussed.}
\begin{equation*}
	\begin{tikzpicture}
		\matrix (m) [matrix of math nodes,row sep=2em,column sep=5em,minimum width=3em]
		{
			\eqref{eq:P} & \eqref{eq:dp} \\
			\eqref{eq:oc} & \eqref{eq:doc} \\};
		\path[-stealth]
		(m-1-1) edge node [left] {optimality conditions} (m-2-1)
		edge node [above] {discretization} (m-1-2)
		(m-2-1.east|-m-2-2) edge node [below] {discretization}
		node [above] {} (m-2-2)
		(m-1-2) edge node [right] {optimality conditions} (m-2-2);
	\end{tikzpicture}
\end{equation*}

Remark that in \eqref{eq:doc}, if the method $(a_{ij},b_i)$ is explicit, then $(\hat a_{ij},\hat b_i)$ is in contrast an implicit method. Hence it is useful to consider the costate equation backward in time and use the time adjoint of $(\hat a_{ij},\hat b_i)$ which turns out to be explicit as shown in Proposition \ref{prop:exp} in Section \ref{sec:dasection}.  
Indeed, 
consider method \eqref{eq:doc} and proceed as in \cite{Hager00,HPS13},
$$
p_{k+1}+h\sum_{i=1}^{s}\hat{b}_i\nabla_yH(u_{ki},y_{ki},p_{ki})=p_{ki}+h\sum_{j=1}^{s}\hat{a}_{ij}\nabla_yH(u_{kj},y_{kj},p_{kj}),
$$
we then deduce from the identity $b_j-\hat{a}_{ij} = \frac{b_j}{b_i}a_{ji}$
the following formulation where $p_N$ serves to initialize the induction on $k=N-1,  \ldots, 0$,
$$
p_k=p_{k+1}+h\sum_{i=1}^{s}b_i\nabla_yH(u_{ki},y_{ki},p_{ki}),\quad
p_{ki} =p_{k+1}+h\sum_{j=1}^{s}\frac{b_j}{b_i}a_{ji}\nabla_yH(u_{kj},y_{kj},p_{kj}).
$$
The above Runge-Kutta method $(\tilde a_{ij},\tilde b_i):=(\frac{b_j}{b_i}a_{ji},b_i)$ for the costate is in fact the time adjoint of $(\hat a_{ij},\hat b_i)$. Since the method $(\hat a_{ij},\hat b_i)$ is called in the literature the adjoint method in the sense of optimal control because it is applied to the adjoint equation (costate), we call the method $(\tilde a_{ij},\tilde b_i):=(\frac{b_j}{b_i}a_{ji},b_i)$ the \textbf{double adjoint} of $(a_{ij},b_i)$, and we rewrite method \eqref{eq:doc} as
\begin{equation}\label{eq:doc1}
	\begin{split}
		y_{k+1} &= y_k+h\sum_{i=1}^{s}b_if(u_{ki},y_{ki}),\quad k=0,\dots,N-1\\
		y_{ki} &= y_k+h\sum_{j=1}^{s}a_{ij}f(u_{kj},y_{kj}),\quad k=0,\dots,N-1\quad i=1,\dots,s\\
		p_{k} &= p_{k+1}+h\sum_{i=1}^{s}{\tilde b_i}\nabla_yH(u_{ki},y_{ki},p_{ki}),\quad k=N-1,\dots,0\\
		p_{ki} &= p_{k+1}+h\sum_{j=1}^{s}{\tilde a_{ij}}\nabla_yH(u_{kj},y_{kj},p_{kj}),\quad k=N-1,\dots,0\quad i=s,\dots,1\\
		~~0&=\nabla_uH(u_{ki},_{ki},p_{ki}),\qquad k=0,\dots,N-1\quad i=1,\dots,s\\
		~~y_0&=y^0,\qquad p_N=\nabla \Psi(y_N).
	\end{split}
\end{equation}
Note that we integrate the state forward in time (increasing indices $k$) and the costate backward in time (decreasing $k$).

{An immediate consequence of Theorem \ref{th:hager} is that applying naively a Runge-Kutta method yields in general an order reduction, as stated in the following remark.
	\begin{remark} 
		Consider a Runge-Kutta method $(a_{ij},b_i)$ of order $p_{ode}=2$ and define $(\tilde a_{ij},\tilde b_i):=(a_{ij},b_i)$, in general the obtained partitioned Runge-Kutta method \eqref{eq:doc1} is not of order $p_{oc}=2$. Indeed, the coupling order conditions $\sum_{i,j=1}^{s}b_i\hat a_{ij}=\frac12$ and $\sum_{i,j=1}^{s}\hat b_i a_{ij}=\frac12$ are not automatically satisfied in general. In particular, for $(a_{ij},b_i)$ being the standard order two RKC method studied in the next section below, it can be checked that $p_{oc}=1$. This makes non trivial the construction of an explicit stabilized scheme of order 2 for optimal control problems, as described in section \ref{sec:rkcda}. We will see that the notion of double adjoint of a Runge-Kutta method, as described above, is an essential tool in our study.
		
\end{remark}}

To implement \eqref{eq:doc1} (equivalent to \eqref{eq:doc}), we shall use the following classical iterative algorithm which was proposed as a parallel algorithm with $N$ sub-problems in \cite[Algo.\ts 4]{MRS13}. For simplicity of the presentation, we only recall the non parallel algorithm, but emphasize that the parallel version could also be used in our context with explicit stabilized schemes. 
\begin{algo} (see for instance \cite[Algo.\ts 4]{MRS13}). \label{algo:iter}
	First start with an initial guess for the internal stages of the control $U^0=(u^0_{ki})_{k=0\dots,N-1}^{i=1,\dots,s}$ where $u^0_{ki}\in\IR^{m}$ for all $k$ and $i$. Denote by $y^l=(y^l_k)_{k=0,\dots,N-1}$ and $Y^l=(y^l_{ki})_{k=0\dots,N-1}^{i=1,\dots,s}$ the collection of the state values and its internal stages respectively at iteration $l$, and analogously we use the notations $p^l$ and $P^l$ for costate and its internal stages at iteration $l$. Suppose that at the iteration $l$, $U^l$ is known. For the next iteration $l+1$, the computation of $U^{l+1}$ is achieved as follows.
	\begin{enumerate}
		\item Compute $Y^l,~P^l,y^l,~p^l$ as in \eqref{eq:doc1}, the computation is done forward in time for the state $y_k$ and backward in time for the costate $p_k$.
		\item Compute $\tilde u^{l+1}_{ki}$ solving the system 
		$\nabla_uH(\tilde u^{l+1}_{ki},
		y^l_{ki},p^l_{ki})=0$, for all $k$ and $i$ using an analytical formula if available, or a Newton method for instance.
		\item Denoting $\tilde U^l=(\tilde u^l_{ki})_{k=0\dots,N-1}^{i=1,\dots,s}$, define $U^{l+1}$ by $U^{l+1}=(1-\theta^l)U^l+\theta^l\tilde U^{l+1}$, where $\theta^l$ is defined to minimize the scalar function $\theta\mapsto \Psi(U^{l+1})$ using a simple trisection method for instance, where $\Psi:U\mapsto\Psi(y^N)$.
	\end{enumerate}
	We stop when $\|U^{l+1}-U^l\|\leq tol$, where $tol$ is a prescribed small tolerance.
\end{algo}

In the recent paper \cite{LF19}, it was shown that the forward-backward sweep iteration defined in Algorithm \ref{algo:iter} used in implementing discretized optimal control problems converges when using a symplectic Runge-Kutta discretization, which strengthens the interest of such symplectic methods.

For simplicity, we assume in the rest of the paper that $y_0=y^0$ always holds in \eqref{eq:doc}.

\subsection{Explicit stabilized methods}\label{sec:expstab}

Stability is a crucial property of numerical integrators for solving stiff problems and we refer to the book \cite{hairer96sod}.
A Runge-Kutta method is said to be stable if the numerical solution stays bounded along the integration process.	Applying a Runge-Kutta method \eqref{eq:rk} to the linear test problem (with fixed parameter $\lambda\in \mathbb{C}$),
\begin{equation}\label{eq:test}\dot y=\lambda y,\quad y(0)=y_0,\end{equation}
with stepsize $h$ yields a recurrence of the form $y_{k+1}=R(h\lambda)y_k$ and by induction we get $y_k=R(h\lambda)^ky_0$. The function $R(z)$ is called the stability function of the method and the stability domain is defined as 
$
{\cal S}:=\{z\in \mathbb{C}; |R(z)|\leq 1\},
$
and ${y_k}$ remains bounded if and only if $h\lambda\in\cal S$.
The same result also applies to the internal stages of the Runge-Kutta method, for all $i=1,\dots,s$, where $s$ is the number of internal stages, $y_{ki}=R_i(h\lambda)y_k$, for some function $R_i$.
Remark that $R(z)$ is a rational function for implicit methods, but in the case of explicit methods the stability function $R(z)$ reduces to a polynomial.
The simplest Runge-Kutta type method to integrate ODEs \eqref{eq:ode}
is the explicit Euler method $y_{k+1}=y_{k}+hf(y_k)$ with stability polynomial $R(z)=1+z$. However, its stability domain ${\cal S}$ is small (it reduces to the disc of center $-1$ and radius $1$ in the complex plane) which yields a severe time step restriction and makes it very expensive for stiff problems. 

\subsubsection{Optimal first order Chebyshev methods}
The idea of explicit stabilized methods (as introduced in \cite{HoS80}, see the survey \cite{Abd13c}) is to construct explicit Runge-Kutta integrators with extended stability domain that grows quadratically with the number of stages $s$ of the method along the negative real axis, and hence allows to use large time steps typically for problems arising from diffusion partial differential equations. The family of methods considered in \cite{HoS80} is known as ``Chebyshev methods'' since its construction relies on Chebyshev polynomials $T_s(x)$ satisfying $T_s(\cos \theta) =\cos (s \theta)$. These polynomials allow us to obtain a two-step recurrence formula and hence low memory requirements and good internal stability with respect to round-off errors. 
The order one Chebyshev method for solving a stiff ODE \eqref{eq:ode} is defined as an explicit $s$-stage Runge-Kutta method by the recurrence
\begin{eqnarray} 
	\label{eq:Cheb1} 
	y_{k0} &=& y_{k}, \quad
	y_{k1} = y_{k}+ \mu_1 h F(y_{k0}), \nonumber \\
	y_{ki} &=&\mu_ihF(y_{k,{i-1}})+\nu_iy_{k,{i-1}}+(1-\nu_i)y_{k,{i-2}},\quad j=2,\ldots,s\\
	y_{k+1} &=& y_{ks} \nonumber,
\end{eqnarray}
where 
\begin{equation}\label{eq:omega}
	\omega_{0}:=1+\frac{\eta}{s^{2}},\quad \omega_{1}:=\frac{T_{s}(\omega_{0})}{T'_{s}(\omega_{0})},\quad
	\mu_1:=\frac{\omega_1}{\omega_0},
\end{equation} 
{where $\eta$ is called the damping parameter and is used to make the stability of the method robust with respect to small perturbations as described below.} Finally, for all $i=2,\ldots,s,$
\begin{equation}\label{eq:coeffscheb}
	\mu_i:=\frac{2\omega_1T_{i-1}(\omega_0)}{T_i(\omega_0)},\quad 
	\nu_i:=\frac{2\omega_0T_{i-1}(\omega_0)}{T_i(\omega_0)}.
\end{equation}
One can easily check that the (family) of methods \eqref{eq:Cheb1} has the same first order of accuracy as the explicit Euler method (recovered for $s=1$).
Note that instead of the standard Runge-Kutta method formulation \eqref{eq:rk} with coefficients $(a_{ij},b_i)$, the one step method $y_{k+1}=\Phi_h(y_k)$ in \eqref{eq:Cheb1} should be implemented using a recurrence relation (indexed by $j$) inspired from the relation \eqref{eq:recTU1} on Chebyshev polynomials
\begin{equation} \label{eq:recTU1}
	T_j(z) = 2z T_{j-1}(z) - T_{j-2}(z),\qquad
\end{equation}
where $T_0(z)= 1,T_1(z)= z$. 
This implementation \eqref{eq:Cheb1}  yields a good stability \cite{HoS80} of the scheme with respect to round-off errors.
The most interesting feature of this scheme is its stability behavior. Indeed, the method \eqref{eq:Cheb1} applied to \eqref{eq:test} yields, 
with $z=\lambda h$,
$y_{k+1}=R_s^\eta(z) y_{k}=\frac{T_{s}(\omega_0+\omega_1 z)}{T_{s}(\omega_0)} y_{k}$. A large real negative interval $(-C_\eta s^2 ,0)$ is included in the 
stability domain of the method ${\cal S}:=\{z\in \mathbb{C}; |R_s^\eta(z)|\leq 1\}$.
For the internal stages, we have analogously
$y_{ki}=R_{s,i}^\eta(z) y_k =\frac{T_i(\omega_{0}+\omega_1z)}{T_i(\omega_{0})}y_k$. The constant $C_\eta = 2-4/3\,\eta +\bigo(\eta^2)$ depends on the so-called damping parameter $\eta$ and for $\eta=0$,
it reaches the maximal value $C_0=2$ (also optimal with respect to all possible stability polynomials for explicit schemes of order 1). 
Hence, given the stepsize $h$, for dissipative vector fields with a Jacobian having large real negative eigenvalues (such as diffusion problems) with spectral radius $\lambda_{\max}$ at $y_n$, the parameter $s$ for the next step $y_{n+1}$ can be chosen adaptively as\footnote{The notation $[x]$ stands for the integer rounding of real numbers.}
\begin{equation} \label{eq:defsnum}
	s := \left[\sqrt{\frac{h\lambda_{\max}+1.5}{2-4/3\,\eta}}+0.5\right],
\end{equation}
see \cite{Abd02} in the context of stabilized schemes of order two with adaptive stepsizes.
The method \eqref{eq:Cheb1}
is much more efficient as its stability domain increases {\it quadratically} with the number $s$ of function evaluations while a composition of $s$  explicit Euler steps
(same cost) has a stability domain that only increases {\it linearly} with $s$.
\begin{figure}[t]
	\centering
	\begin{subfigure}[t]{0.49\linewidth}
		\centering
		\includegraphics[width=\linewidth]{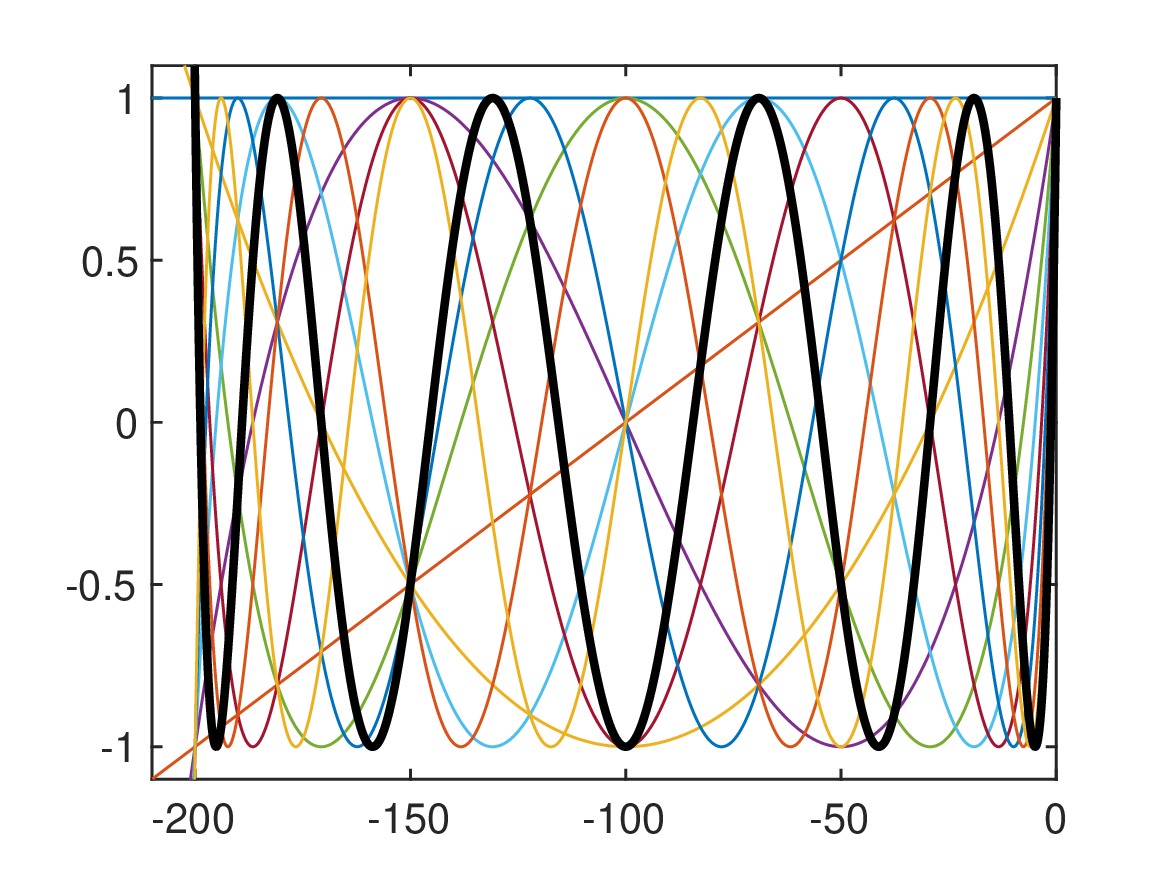}
		\caption{$\eta=0$}
	\end{subfigure}
	\begin{subfigure}[t]{0.49\linewidth}
		\centering
		\includegraphics[width=\linewidth]{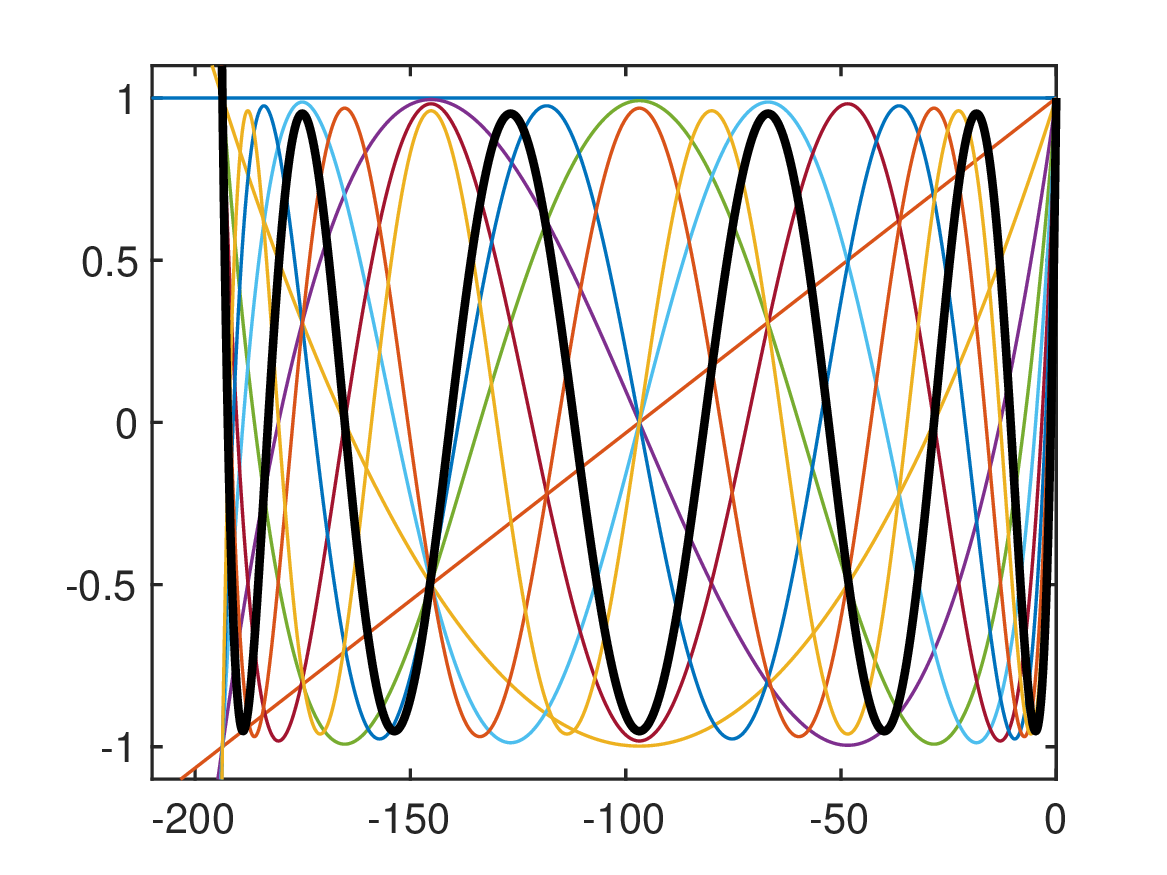}
		\caption{$\eta=0.05$}
	\end{subfigure}
	\caption{Internal stages (thin curves) and stability polynomials (bold curves) of the  Chebyshev method \eqref{eq:Cheb1} for $s=10$ with and without damping.}
	\label{fig:stagescheb}
\end{figure}
In Figure \ref{fig:stagescheb} we plot the internal stages 
for $s=10$ and different values $\eta=0$ and $\eta=0.05$ , respectively.
We observe that in the absence of damping ($\eta=0$), the stability function (here a polynomial) is bounded by $1$ in the large real interval $[-2s^2,0]$ of width $2\cdot 10^2=200$. However, for all $z$ that are local extrema of the stability function, where $|R_s^\eta(z)|=1$, the stability domain is very narrow in the complex plane. 
Here the importance of some damping appears, to make the scheme robust with respect to small perturbations of the eigenvalues. A typical recommended value for the damping parameter is $\eta=0.05$, see \cite{Ver96b,Abd13c}. The advantage of this damping is that the stability polynomial is now \textbf{strictly} bounded by $1$ and the stability domain includes a neighborhood of the negative interval $(-C_\eta s^2,0)$. This improvement costs a slight reduction of the stability domain length from $2s^2$ to $C_{\eta} s^2$ where $C_\eta\geq 2-\frac43 \eta$.

\subsubsection{Second order RKC methods}

To design a second order method, we need the stability polynomial to satisfy\footnote{Indeed, up to order two, the order conditions for nonlinear problems are the same as the order conditions for linear problems \cite[Chap.\ts III]{HLW06}.} $R(z)=1+z+\frac{z^2}2+\mathcal{O}(z^3)$. In \cite{B71}, Bakker introduced a correction to the first order shifted Chebyshev polynomials to get the following second order polynomial
\begin{equation} \label{eq:stabRKC}
	R_s^\eta (z) = a_s+b_sT_s(\omega_0+\omega_2z),
\end{equation}
where,
\begin{equation}\label{eq:omegarkc}
	a_s=1-b_sT_s(\omega_0),\quad
	b_s=\frac{T''_s(\omega_0)}{(T'_s(\omega_0)^2)},\quad
	\omega_0=1+\frac{\eta}{s^2},\quad
	\omega_2=\frac{T'_s(\omega0)}{T''_s(\omega_0)},\quad
	\eta=0.15.
\end{equation}
For each $s$, $|R_s^\eta(z)|$ remains bounded by $a_b+b_s=1-\eta/3 + \bigo(\eta^2)$ for $z$ in the stability interval (except for a small interval near the origin). The stability interval along the negative real axis is approximately $[-0.65s^2,0]$, and covers about $80\%$ of the optimal stability interval for second order stability polynomials, and the formula now for calculating $s$ for a given time step $h$ is
\begin{equation} \label{eq:defsnumrkc}
	s := \left[\sqrt{\frac{h\lambda_{\max}+1.5}{0.65}}+0.5\right].
\end{equation}
Using the recurrence relation of the Chebyshev polynomials, the RKC method as introduced in \cite{HoS80} is defined by
\begin{equation} 
	\label{eq:rkc}
	\begin{split}
		y_{k0} &= y_{k}, \quad
		y_{k1} = y_{k0}+h b_1\omega_2 f(y_{k0}), \\
		y_{ki} &=y_{k0}+\mu'_ih(f(y_{k,{i-1}})-a_{i-1}f({y_{k0}}))
		+\nu'_i(y_{k,{i-1}}-y_{k0})\\&+\kappa'_i(y_{k,{i-2}}-y_{k0}), \\
		y_{k+1} &= y_{ks},
	\end{split}
\end{equation}
where, 
\begin{equation}\label{eq:coeffsrkc}
	\mu'_i=\frac{2b_i\omega_2}{b_{i-1}},~~ 
	\nu'_i=\frac{2b_i\omega_0}{b_{i-1}},~~ 
	\kappa'_i=-\frac{b_i}{b_{i-2}},~~
	b_i=\frac{T''_i(\omega_0)}{(T'_i(\omega_0)^2)},~~
	a_i=1-b_iT_i(\omega_0),
\end{equation}
for $i=2,\ldots,s$. As in \eqref{eq:stabRKC}, the stability functions of the internal stages are given by $R_i^\eta (z) = a_i+b_iT_i(\omega_0+\omega_2z)$, where  $i=0,\dots,s-1,$ and the parameters $a_i$ and $b_i$ are chosen such that the above stages are consistent $R_i^\eta (z)=1+\mathcal{O}(z)$. The parameters $b_0$ and $b_1$ are free ($R_0^\eta(z)$ is constant and only order 1 is possible for $R_1^\eta$(z)) and the values $b_0=b_1=b_2$ are suggested in \cite{SV80}. 
In this paper, to facilitate the analysis of the internal stability of the optimal control methods, making the internal stages of the RKC method analogous to the Chebyshev method \eqref{eq:Cheb1} of order one, we introduce a new implementation of RKC method 
\begin{eqnarray} 
	\label{eq:rkcnew}
	y_{k0} &=& y_{k}, \quad
	y_{k1} = y_{k}+ \mu_1 h F(y_{k0}), \nonumber \\
	y_{ki} &=&\mu_ihF(y_{k,{i-1}})+\nu_iy_{k,{i-1}}+(1-\nu_i)y_{k,{i-2}},\quad i=2,\ldots,s\\
	y_{k+1} &=& a_sy_{k0}+b_sT_s(\omega_o)y_{ks} \nonumber,
\end{eqnarray}
where $\mu_1=\frac{\omega_2}{\omega_0}$, $a_s,b_s$ are given in \eqref{eq:omegarkc}, and the parameters $\mu_i$ and $\nu_i$ are defined by (analogously to \eqref{eq:coeffscheb}, using $\omega_2$ instead of $\omega_1$),
$
\mu_i=\frac{2\omega_2T_{i-1}(\omega_0)}{T_i(\omega_0)},~
\nu_i=\frac{2\omega_0T_{i-1}(\omega_0)}{T_i(\omega_0)}$, for $ i=2,\ldots,s.
$
This new formulation \eqref{eq:rkcnew} yields the same stability function $R^{\eta}_s(z)$ in \eqref{eq:stabRKC} but different internal stages, and it will be helpful when we introduce the double adjoint of RKC in Section \ref{sec:dasection}. 
We recall that for an accurate implementation, one should not use the standard Runge-Kutta formulations with coefficients $(a_{ij},b_i)$ for \eqref{eq:Cheb1} and \eqref{eq:rkc} since they are unstable due to the accumulated round-off error for large values of $s$. In contrast, the \emph{low memory} induction formulations \eqref{eq:Cheb1} and \eqref{eq:rkc} are easy to implement and very stable with respect to round-off errors~\cite{HoS80}. 
\begin{figure}
	\centering
	\begin{subfigure}[t]{0.49\linewidth}
		\centering
		\includegraphics[width=\linewidth]{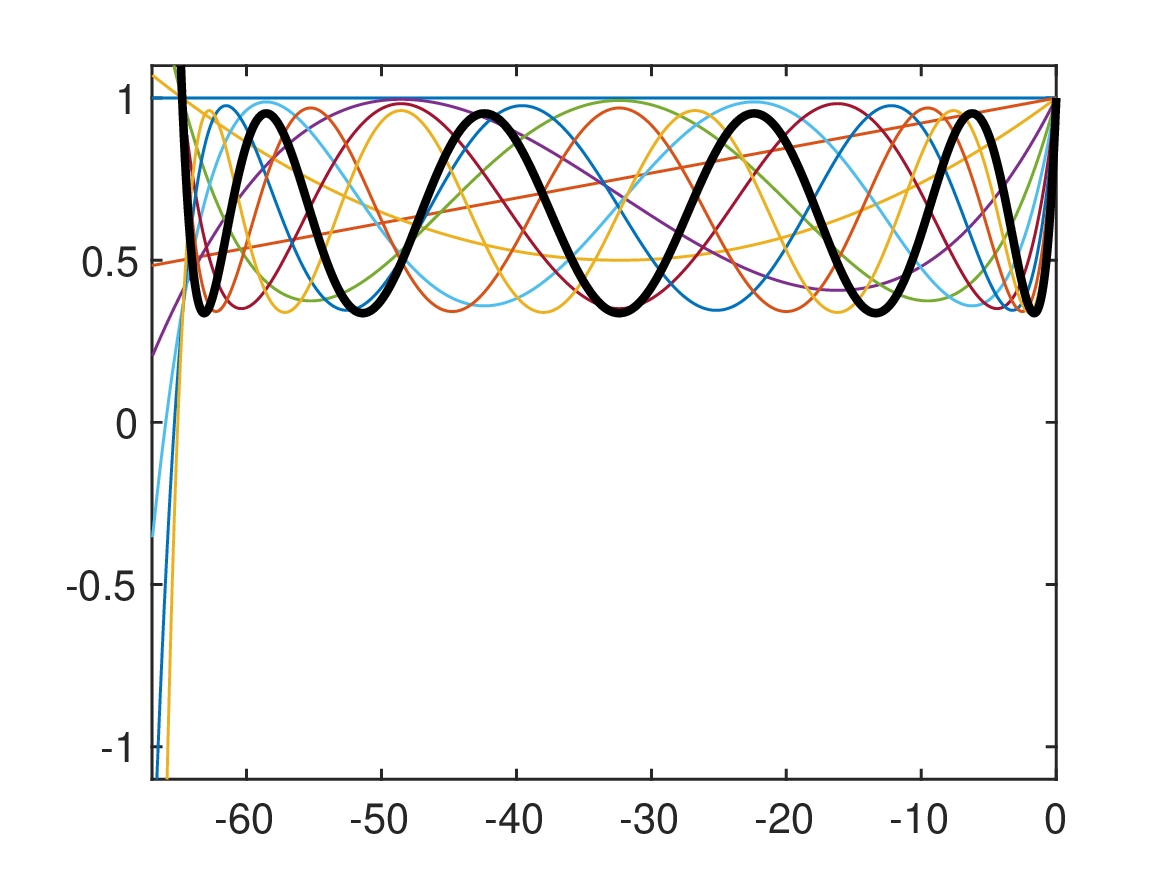}
		\caption{Classical RKC \eqref{eq:rkc}.}
	\end{subfigure}
	\begin{subfigure}[t]{0.49\linewidth}
		\centering
		\includegraphics[width=\linewidth]{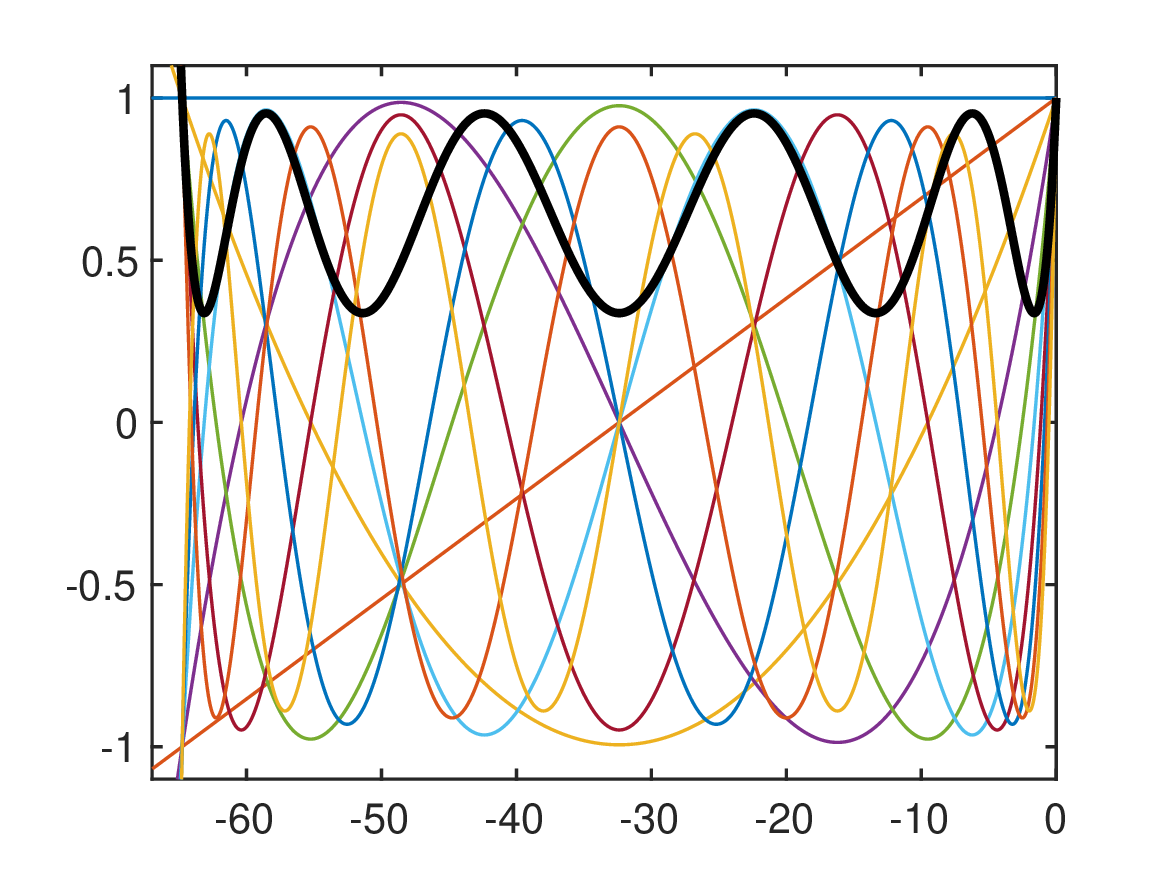}
		\caption{New RKC \eqref{eq:rkcnew}.}
	\end{subfigure}
	\caption{Internal stages (thin curves) and stability polynomials (bold curves) of the classical \eqref{eq:rkc} and the new \eqref{eq:rkcnew} RKC implementations for $s=10$ internal stages.}
	\label{fig:stagesrkc}
\end{figure}

Note that \eqref{eq:rkcnew} is not the same Runge-Kutta method as the standard RKC \eqref{eq:rkc} from \cite{HoS80}, it has different internal stages but the same stability function $R^{\eta}_s(z)$ in \eqref{eq:stabRKC} and hence order $p_{ode}=2$.
In Figure \ref{fig:stagesrkc} we can see that the internal stages of the new formulation \eqref{eq:rkcnew} of RKC have an analogous behavior compared to those of the first order Chebyshev method oscillating around zero (see Figure \ref{fig:stagescheb}), in contrast to those
of the standard RKC \eqref{eq:rkc}, oscillating around the value $a_s>0$. In addition, comparing Figures \ref{fig:stagesrkc}b and \ref{fig:stagescheb}b, we see that the internal stages of the new RKC method \eqref{eq:rkcnew} are the same as the order one Chebyshev method \eqref{eq:Cheb1} up to a horizontal rescaling $\omega_2/\omega_1$. This is because the $s$ internal stages of the methods have the stability function $T_i(\omega_0+\omega_jz)/T_i(\omega_0),~i=1,\dots,s$ for $j=1,2$ respectively.

Notice however that this modification of the standard RKC method \eqref{eq:rkc} deteriorates the order two of accuracy of the internal stages of the method, useful for PDEs with non homogeneous boundary conditions \cite[Chap.\ts V]{HV03}.
\begin{remark}
	Analogously to the standard RKC method \eqref{eq:rkc}, the new RKC formulation \eqref{eq:rkcnew} can be equipped with an error estimator to allow a variable time step control. Since the new formulation \eqref{eq:rkcnew} has the same stability function, one can use the same error estimator as proposed in \cite[Sect.\ts3.1]{SSV98}. In this paper we consider only a constant time step for simplicity of the presentation but emphasize that a variable time step $h_n$ can be used for the new optimal control method of order two.
\end{remark}
\begin{remark} \label{rem:ROCK2}
	Second order Runge-Kutta Orthogonal Chebyshev (ROCK2) methods, as introduced in \cite{AbM01}, are second order explicit stabilized methods for which the stability domain contains an interval that covers around $98\%$ of the optimal one for second order explicit methods. 
	It would be interesting to extend such second order methods with nearly optimally large stability domain to the context of optimal control problems.
	It turns out however that such an extension based on ROCK2 (or its order four extension ROCK4 \cite{Abd02}) is difficult and not analyzed in the present paper. This difficulty arises from the severe instability of the internal stages of the double adjoint of the standard ROCK2 method (see Figure \ref{fig:stagesrock2}) which would introduce large round-off errors for stiff problems (large values of $s$), making the obtained optimal control method not reliable.
\end{remark}
\begin{figure}
	\centering
	\begin{subfigure}[t]{0.49\linewidth}
		\centering
		\includegraphics[width=\linewidth]{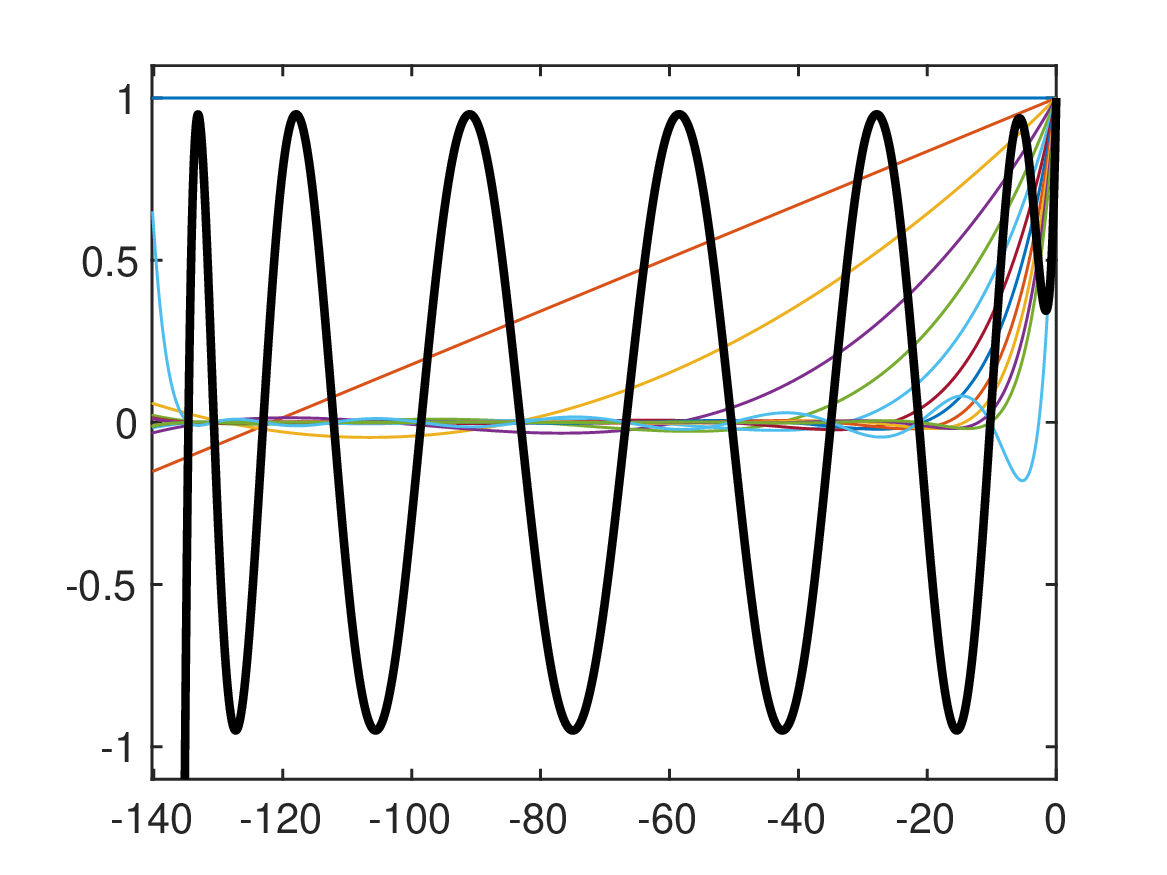}
		\caption{Classical ROCK2.}
	\end{subfigure}
	\begin{subfigure}[t]{0.49\linewidth}
		\centering
		\includegraphics[width=\linewidth]{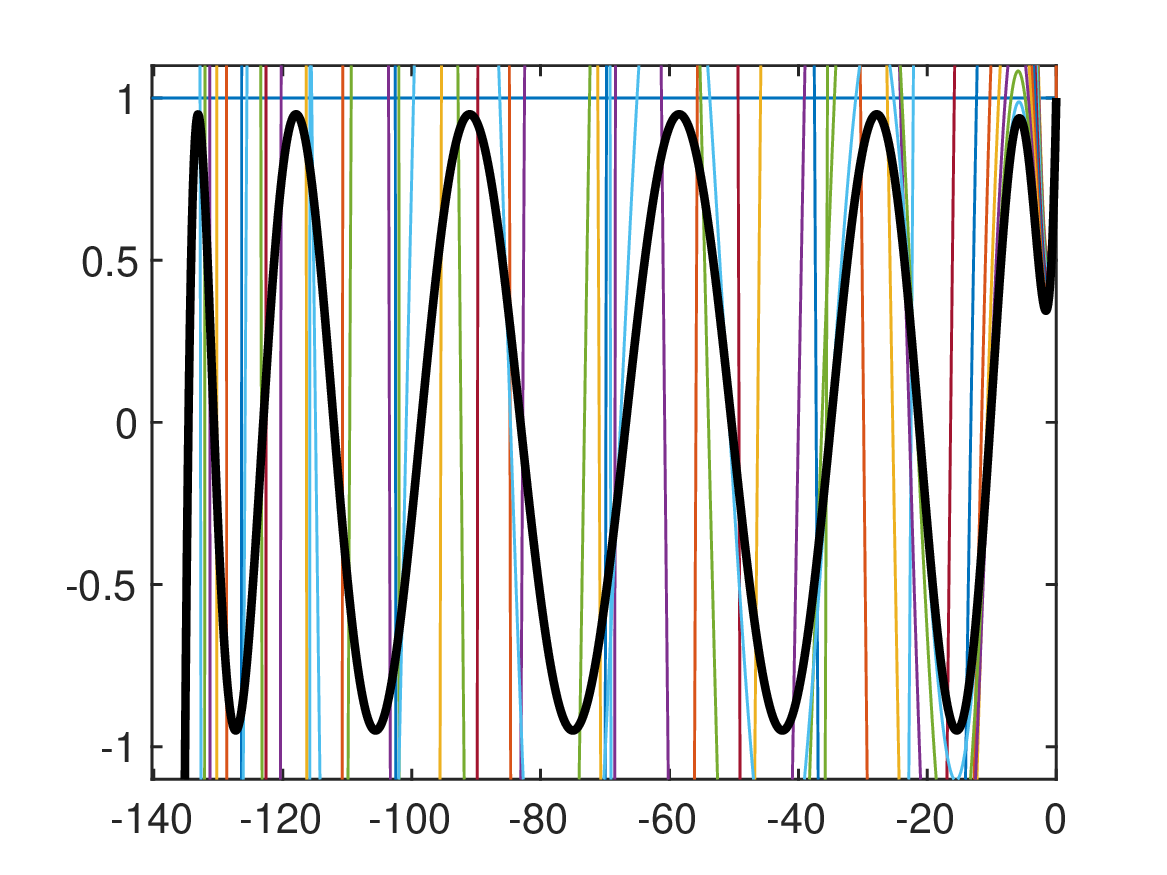}
		\caption{Double adjoint of ROCK2.}
	\end{subfigure}
	\caption{Internal stages (thin curves) and stability polynomials (bold curves) of the ROCK2 method \cite{AbM01} and its double adjoint for optimal control for $s=13$ stages.}
	\label{fig:stagesrock2}
\end{figure}

\section{Explicit stabilized methods for optimal control}\label{sec:dasection}

In this section, we derive new two term recurrence relations of the double adjoints of Chebyshev method \eqref{eq:Cheb1} and RKC method \eqref{eq:rkcnew} that are numerically stable. Indeed, one cannot rely on standard Runge-Kutta coefficients for the implementation of explicit stabilized schemes.

\subsection{Double adjoint of a general Runge-Kutta method}

Recall from \eqref{eq:doc1} that the Butcher tableau of the double adjoint $(\tilde a_{ij},\tilde b_i)$ of \eqref{eq:butcher} is
\begin{equation}\label{eq:da}
	\begin{array}
		{c|c}
		& \tilde a_{ji}\\
		\hline 
		&\\[-2ex]
		& \tilde b_i
	\end{array} :=
	\begin{array}
		{c|c}
		& \frac{b_j}{b_i}a_{ji}\\
		\hline
		&\\[-1.7ex]
		& b_i
	\end{array}.
\end{equation}
\begin{proposition}\label{prop:exp}
	If a Runge-Kutta method $(a_{ij},b_i)$ \eqref{eq:butcher} is explicit, then its double adjoint \eqref{eq:da} is explicit as well.
\end{proposition}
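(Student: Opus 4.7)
The plan is an elementary index-chasing argument that exploits the structure of the formula $\tilde a_{ij} = \tfrac{b_j}{b_i} a_{ji}$ (well-defined since $b_i \neq 0$ is assumed throughout). First I would recall the characterization of explicitness: a Runge-Kutta method with coefficients $(a_{ij})$ is explicit if and only if its Butcher matrix is strictly lower triangular, i.e.\ $a_{mn} = 0$ whenever $n \geq m$.

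Next I would apply this to the index pair $(m,n) = (j,i)$ appearing inside the double-adjoint formula. Explicitness of the original method gives $a_{ji} = 0$ whenever $i \geq j$, and hence
\[
\tilde a_{ij} = \frac{b_j}{b_i}\, a_{ji} = 0 \quad \text{for all } i \geq j.
\]
In other words, the Butcher matrix $(\tilde a_{ij})$ is strictly upper triangular. Consequently the $i$-th stage of the double adjoint involves only stages of index $j > i$, so the stages can be evaluated without solving any implicit equation by descending induction on $i$, starting from $i = s$ (which needs no previous value) down to $i = 1$. This is exactly the order in which the costate stages $p_{k_i}$ are produced in \eqref{eq:doc1}, consistent with the backward-in-time integration of the adjoint equation.

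The only conceptual point worth underlining, but not a real obstacle, is that "explicit" for the double adjoint has to be read in the reverse-order sense dictated by the backward sweep on $k$: the matrix $(\tilde a_{ij})$ is strictly upper rather than strictly lower triangular. Equivalently, reindexing the stages by $i \mapsto s+1-i$ returns a genuinely lower-triangular tableau, reflecting the fact that the double adjoint is the time adjoint of the (generically implicit) method $(\hat a_{ij}, \hat b_i)$, and the time adjoint reverses the ordering of the stages.
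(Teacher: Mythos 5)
Your argument is correct and essentially identical to the paper's proof: both observe that $\tilde a_{ij}=\frac{b_j}{b_i}a_{ji}$ vanishes for $i\geq j$ because $(a_{ij})$ is strictly lower triangular, and both conclude explicitness by reversing the stage ordering $i\mapsto s+1-i$ to obtain a strictly lower triangular tableau. Your added remark that the stages are computed by descending induction on $i$, matching the backward sweep in \eqref{eq:doc1}, is a helpful clarification but not a different method.
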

\begin{proof}
	For an explicit Runge-Kutta method we have that $a_{ij}=0$ for all $j\geq i$ i.e the matrix $(a_{ij})$ is strictly lower triangular. Permuting the internal stages in \eqref{eq:da} for $i,j=s,\ldots,2,1$ does not modify the method but yields the following Butcher tableau
	\begin{equation}
		\begin{array}
			{c|c}
			&\frac{b_{s+1-j}}{b_{s+1-i}}a_{s+1-j,s+1-i} \\
			\hline
			& b_{s+1-i}
		\end{array}
	\end{equation}
	which is strictly lower triangular, and thus the method \eqref{eq:da} is again explicit.
\end{proof}

An immediate consequence of Proposition \ref{prop:exp} is that for explicit methods, the stability function of the double adjoint is again a polynomial. In fact it turns out, as stated in Theorem \ref{th:stabfun} below, that for any Runge-Kutta method, the double adjoint $(\tilde a_{ij},\tilde b_i)$ has exactly the same stability function as $(a_{ij},b_i)$. Note however that this result does not hold in general for the internal stages (see Remark \ref{rem:ROCK2} about ROCK2).
\begin{theorem}\label{th:stabfun} A Runge-Kutta method $(a_{ij},b_i)$ and its double adjoint $(\tilde a_{ij},\tilde b_i)$ in \eqref{eq:da} share the same stability function $R(z)$. 
\end{theorem}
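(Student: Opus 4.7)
The plan is to work with the closed form of the stability function in terms of the Butcher matrix. Recall that for any Runge-Kutta method with coefficient pair $(A,b)$ where $A=(a_{ij})$ and $b=(b_i)$, applying \eqref{eq:rk} to the linear test equation \eqref{eq:test} gives
\begin{equation*}
R(z) \;=\; 1 + z\, b^T (I - zA)^{-1} \mathbf{1},
\end{equation*}
where $\mathbf{1}=(1,\ldots,1)^T$. So the entire claim reduces to a linear-algebra identity: I need to show that this expression is invariant under the transformation $(A,b)\mapsto (\tilde A,\tilde b)$ defined by \eqref{eq:da}.

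First I would rewrite the double adjoint in matrix form. Letting $B=\diag(b_1,\ldots,b_s)$, the relation $\tilde a_{ij}=\frac{b_j}{b_i}a_{ji}$ reads $\tilde A = B^{-1} A^T B$, and of course $\tilde b = b$. The key observation that will make everything collapse is the pair of identities $b^T B^{-1} = \mathbf{1}^T$ and $B\mathbf{1}=b$, which follow immediately from the definition of $B$.

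Next I would insert these into the stability formula for the double adjoint. Using $\tilde A = B^{-1}A^T B$ and the similarity identity $(I - zB^{-1}A^T B)^{-1} = B^{-1}(I-zA^T)^{-1}B$, I would compute
\begin{equation*}
\tilde R(z) \;=\; 1 + z\, b^T\, B^{-1} (I-zA^T)^{-1} B\, \mathbf{1} \;=\; 1 + z\, \mathbf{1}^T (I-zA^T)^{-1} b.
\end{equation*}
Taking the transpose of the scalar quantity $\mathbf{1}^T(I-zA^T)^{-1}b$ gives $b^T(I-zA)^{-1}\mathbf{1}$, hence $\tilde R(z)=R(z)$.

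I do not anticipate a serious obstacle. The only mild subtlety is that the definition of $\tilde A$ requires $b_i\neq 0$ for all $i$, so the matrix $B$ is invertible; this assumption is already in force throughout the section (via the hypothesis of Theorem \ref{th:hager}). Note also that the identical manipulation does \emph{not} preserve the internal-stage stability functions, consistent with the phenomenon described in Remark \ref{rem:ROCK2}, because the intermediate vectors $(I-zA^T)^{-1}b$ and $(I-zA)^{-1}\mathbf{1}$ differ componentwise; it is only the scalar end result that is invariant.
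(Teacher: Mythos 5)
Your proof is correct and rests on the same key identity as the paper's own argument, namely the diagonal similarity $\tilde A = B^{-1}A^T B$ with $B=\diag(b_i)$ combined with invariance of the relevant scalar under transposition; the only difference is that you use the resolvent form $R(z)=1+z\,b^T(I-zA)^{-1}\mathbf{1}$ of the stability function, whereas the paper works with the determinant quotient $R(z)=\det(I-zA+z\mathds{1}b^T)/\det(I-zA)$, so the two routes are essentially identical. Your observation that $b_i\neq 0$ is needed for $B$ to be invertible is well placed and applies equally to the paper's version.
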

\begin{proof}
	Let $A=(a_{ij})$, $A_d=\left(a_{ji}\,b_j/b_i\right)$, and $b=(b_i)$, $i,j=1\dots s$. We recall the formula for the stability function of the Runge-Kutta method $(a_{ij},b_i)$,
	\begin{equation}\label{eq:stabproof}
		R(z)=\frac{\det(I-zA+z\mathds{1}b^T)}{\det(I-zA)},
	\end{equation}
	where $\mathds{1}\in\IR^s$ is the line vector of size $s$ containing only ones.
	Using a simple calculation, one can show that $A_d^T=DAD^{-1}$ where $D=\diag(b_i)$. This implies that $I-zA_d^T=D(I-zA)D^{-1}$, and since $I-zA_d^T=(I-zA_d)^T$, thus $\det(I-zA)=\det(I-zA_d)$.
	Using the same diagonal matrix $D$ we have $D\mathds{1}b^TD^{-1}=\mathds{1}^Tb$, hence $I-zA_d^T+z\mathds{1}b=D(I-zA+z\mathds{1}b)D^{-1}$ and $\det(I-zA_d^T+z\mathds{1}b)=\det(I-zA+z\mathds{1}b)$ and hence the stability function of $(\tilde a_{ij},\tilde b_i)$ is again \eqref{eq:stabproof}.
\end{proof}

\subsection{Chebyshev method of order one for optimal control problems}
For clarity of presentation, we first study Chebyshev method of order one for optimal control problems before introducing the second order RKC method.
Applying the order one Chebyshev method \eqref{eq:Cheb1} to the problem \eqref{eq:P} we get
\begin{equation} \label{eq:cheboc}
	\begin{split}
		\min&~\Psi(y_N),~such~that\\
		y_{k0} &= y_{k},\quad y_{k1} = y_{k0}+ \mu_1 h f(u_{k0},y_{k0}),\\
		y_{ki} &= \mu_ihf(u_{k,{i-1}},y_{k,{i-1}})+\nu_iy_{k,{i-1}}+(1-\nu_i)y_{k,{i-2}},\quad i=2,\ldots,s\\
		y_{k+1} &= y_{ks},
	\end{split}
\end{equation}
where, $k=0,\dots,N-1,~ \eta>0$ is fixed, and the parameters $\mu_i,~\nu_i$ are defined as in \eqref{eq:omega} and \eqref{eq:coeffscheb}.

For the implementation of Algorithm \ref{algo:iter} based on the order one Chebyshev method \eqref{eq:cheboc} for the state equation, the costate equation
can be implemented efficiently using the recurrence relations given by the following theorem.
\begin{theorem}
	The double adjoint of scheme \eqref{eq:cheboc} is given by the recurrence
	\begin{equation}\label{eq:chebda}
		\begin{split}
			p_N &= \nabla\Psi(y_N),\quad p_{ks}=p_{k+1}\\
			p_{k,{s-1}}&=p_{ks}+\frac{\mu_s}{\nu_s}h\nabla_yH(u_{k,{s-1}},y_{k,{s-1}},p_{ks})\\
			p_{k,{s-j}}&=\frac{\mu_{s-j+1}\alpha_{s-j+1}}{\alpha_{s-j}} h \nabla_yH(u_{k,{s-j}},y_{k,{s-j}},p_{k,{s-j+1}})\\&+\frac{\nu_{s-j+1}\alpha_{s-j+1}}{\alpha_{s-j}}p_{k,{s-j+1}}
			\\&+\frac{(1-\nu_{s-j+2})\alpha_{s-j+2}}{\alpha_{s-j}}p_{k,{s-j+2}},\quad j=2\dots,s-1,\\
			p_{k0}&= \mu_1\alpha_1h\nabla_yH(u_{k0},y_{k0},p_{k1})+\alpha_1p_{k1}+(1-
			\nu_2)\alpha_2p_{k2}\\
			p_k&=p_{k0}\\
			&\nabla_uH(u_{k,{s-j}},y_{k,{s-j}},p_{k,{s-j+1}})=0,\quad j=1,\dots,s.
		\end{split}
	\end{equation}
	where $k=N-1,\dots,2,1,0$ and the coefficients $\alpha_j$ are defined by induction as
	\begin{equation}
		\begin{split}
			\alpha_s&=1,\quad
			\alpha_{s-1}=\nu_s,\\
			\alpha_{s-j}&=\nu_{s-j+1}\alpha_{s-j+1}+(1-\nu_{s-j+2})\alpha_{s-j+2},~~j=2\dots s-1.
		\end{split}
	\end{equation}
	\label{th:cheb}
\end{theorem}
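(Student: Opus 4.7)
The strategy is to realize \eqref{eq:cheboc} as a standard $s$-stage Runge-Kutta method with tableau $(a_{ij}, b_i)$, compute its double adjoint via \eqref{eq:da}, and then reorganize the resulting upper-triangular backward RK formula as a two-term recurrence. Setting $Y_i := y_{k_{i-1}}$ for $i = 1,\ldots,s$ as the RK stages, a short induction on the two-term recurrence in \eqref{eq:cheboc} gives $a_{i+1,i}=\mu_i$ and $a_{i+1,j}=\nu_i a_{ij}+(1-\nu_i)a_{i-1,j}$ for $j<i$, all remaining entries vanishing. I would then prove $b_i = \mu_i\alpha_i$: indeed $\alpha_i$ is exactly the coefficient of $y_{k_i}$ in the expansion of $y_{k_s}=y_{k+1}$ obtained by unrolling the recurrence only for indices strictly greater than $i$; since $f(u_{k_i},y_{k_i})$ enters $y_{k_s}$ solely through the $\mu_{i+1}h f(u_{k_i}, y_{k_i})$ term in the recurrence for $y_{k_{i+1}}$, its weight in $y_{k_s}$ is $\mu_{i+1}\alpha_{i+1}$, yielding $b_{i+1}=\mu_{i+1}\alpha_{i+1}$; the backward recurrence for $\alpha_i$ then follows by grouping the two ways $y_{k_i}$ enters subsequent stages.

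With $\tilde a_{ij} = (\mu_j\alpha_j)/(\mu_i\alpha_i)\cdot a_{ji}$, the double adjoint stages satisfy the upper-triangular backward formula
\[
p_{k_i} = p_{k+1} + h\sum_{j>i} \tilde a_{ij}\,\nabla_y H(u_{k_{j-1}},y_{k_{j-1}},p_{k_j}), \qquad i=s,s-1,\ldots,1.
\]
I would complete the proof by downward induction on $i$: substituting the inductive formulas for $p_{k_{i+1}}$ and $p_{k_{i+2}}$ into the two-term recurrence of the theorem and matching the result term by term against the RK sum above. The coefficient of $h\nabla_y H(u_{k_i}, y_{k_i}, p_{k_{i+1}})$ directly equals $\tilde a_{i,i+1} = \mu_{i+1}\alpha_{i+1}/\alpha_i$; the contributions to $p_{k+1}$ combine with total weight $[\nu_{i+1}\alpha_{i+1}+(1-\nu_{i+2})\alpha_{i+2}]/\alpha_i = 1$ by the recurrence of $\alpha_i$; and for $j>i+1$ the combined coefficients collapse to the identity
\[
\tilde a_{ij} = \frac{\nu_{i+1}\alpha_{i+1}}{\alpha_i}\tilde a_{i+1,j} + \frac{(1-\nu_{i+2})\alpha_{i+2}}{\alpha_i}\tilde a_{i+2,j},
\]
which follows from the forward two-step recurrence satisfied by $a_{ji}$ in its first index.

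The main obstacle is this last algebraic step, which couples the forward two-term structure of the Chebyshev tableau with the $\alpha_i$ weights to produce a two-term \emph{backward} recurrence for the adjoint. Once it is established, the boundary cases fall into line using the conventions $\alpha_0:=1$, $\nu_1:=1$, and $\alpha_{s+1}:=0$: the formula $p_{k_0}=p_k$ in the theorem coincides with the RK weight relation $p_k=p_{k+1}+h\sum_i b_i\,\nabla_y H(u_{k_{i-1}},y_{k_{i-1}},p_{k_i})$ via $b_i=\mu_i\alpha_i$, and the special formula for $p_{k_{s-1}}$ follows from the general recurrence by dropping the vanishing $p_{k_{s+1}}$ term.
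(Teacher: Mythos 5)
Your argument is correct in outline but takes a genuinely different route from the paper. The paper never passes through the Butcher tableau: it writes the Lagrangian of the discrete problem \eqref{eq:cheboc} directly in its two-term recurrence form, differentiates to obtain a backward recurrence for the multipliers, observes that the resulting $p_{k_j}$ are not $\mathcal{O}(h)$-perturbations of $p_{k+1}$ (already $p_{k_{s-1}}=\nu_s p_{k+1}+\mathcal{O}(h)$), and then rescales $\hat p_{k_j}:=\alpha_j^{-1}p_{k_j}$; the $\alpha_j$ and their induction appear automatically as the normalizations enforcing $\hat p_{k_j}=p_{k+1}+\mathcal{O}(h)$. You instead extract the tableau $(a_{ij},b_i)$, invoke the general double-adjoint formula \eqref{eq:da}/\eqref{eq:doc1}, and verify that the resulting upper-triangular method collapses back to a two-term recurrence. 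Both are legitimate, since the two formulations of \eqref{eq:cheboc} have identical constraint sets and your rescaled stages are exactly the paper's multipliers. Your route buys a clean reuse of the general theory and the pleasant identity $b_i=\mu_i\alpha_i$ (with $\alpha_i$ as unrolling weights, which indeed satisfy the stated induction); the paper's route avoids the tableau altogether, in the spirit of Remark \ref{rk:instab}, and gets the recurrence form in one pass.

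The one step you should not wave at is the identity $\tilde a_{ij}=\frac{\nu_{i+1}\alpha_{i+1}}{\alpha_i}\tilde a_{i+1,j}+\frac{(1-\nu_{i+2})\alpha_{i+2}}{\alpha_i}\tilde a_{i+2,j}$ for $j>i+1$, which after substituting $\tilde a_{ij}=b_ja_{ji}/b_i$ and $b_i=\mu_i\alpha_i$ reads $a_{ji}/\mu_i=\nu_{i+1}a_{j,i+1}/\mu_{i+1}+(1-\nu_{i+2})a_{j,i+2}/\mu_{i+2}$. This is a backward recurrence in the \emph{second} index of $a$, and it does not follow by inspection from the forward recurrence $a_{j+1,i}=\nu_ja_{ji}+(1-\nu_j)a_{j-1,i}+\mu_j\delta_{ji}$ in the first index; it is true, but needs an argument, e.g.\ that for fixed $i$ both sides, viewed as sequences in $j$, satisfy the same homogeneous recurrence $X_{j+1}=\nu_jX_j+(1-\nu_j)X_{j-1}$ once $j\geq i+3$ (after the Kronecker-delta source terms have fired), so that it suffices to verify the two seed values $j=i+2$ and $j=i+3$ by hand. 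Similarly, $\alpha_0:=1$ is not a free convention: the coefficient of $p_{k_1}$ in the $p_{k_0}$-formula is $\alpha_1$, not $\alpha_1/\alpha_0$, so you must actually prove $\alpha_1+(1-\nu_2)\alpha_2=1$, which follows from consistency (the unrolled coefficient of $y_{k_0}$ in $y_{k_s}=y_k+\mathcal{O}(h)$ must equal $1$). With those two points written out, your proof is complete.
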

\vspace{-0.2cm}The proof of Theorem \ref{th:cheb} uses similar arguments to the proof of Theorem \ref{th:diagram}, with the exception that we now rely on the recurrence formula \eqref{eq:cheboc} instead of the standard Runge-Kutta formulation \eqref{eq:oc} to avoid numerical instability.
\begin{proof}[Proof of Theorem \ref{th:cheb}]
	The Lagrangian associated to the discrete optimization problem \eqref{eq:cheboc} is
	\begin{equation*}
		\begin{split}
			\mathcal{L} = \Psi(y_N)+p_0 \cdot (y^0-y_0)
			&+ \sum_{k=0}^{N-1} 
			\Big\{p_{k+1} \cdot (y_{ks}-y_{k+1})-p_{k0}\cdot (y_k-y_{k0})\\
			&+p_{k1}\cdot (y_{k0}+\mu_1 hf(u_{k0},y_{k0})-y_{k1})\\
			&+\sum_{i=2}^{s}p_{ki}\cdot (\mu_i h f(u_{k,{i-1}},y_{k,{i-1}})+\nu_iy_{k,{i-1}}\\&+(1-\nu_i)y_{ki-2}-y_{ki})\Big\}.
		\end{split}
	\end{equation*}
	Here $p_{k+1},~p_{ki},$ and $p_0$ are the Lagrange multipliers. The optimality necessary conditions are thus given by
	\begin{equation}\label{eq:lag}
		\dif{\L}{y_k}=0,~~~\dif{\L}{y_{ki}}=0,~~~\dif{\L}{p_k}=0,~~~\dif{\L}{p_{ki}}=0,~~~\dif{\L}{u_{ki}}=0,
	\end{equation}
	where $k=0,\dots,N-1$ and $i=0,\dots,s$. By a direct calculation, we obtain the following system,
	\begin{equation}\label{eq:chebda0}
		\begin{split}
			y_{k0}&=y_k,\quad
			y_{k1}=y_{k0}+\mu_1 hf(u_{k0},y_{k0}),\\
			y_{ki}&=\mu_ihf(u_{k,{i-1}},y_{k,{i-1}})+\nu_iy_{k,{i-1}}+(1-\nu_i) y_{k,{i-2}},\quad i=2,\ldots,s,\\
			y_{k+1}&=y_{ks},\\
			p_N &= \nabla\Psi(y_N),\quad p_{ks}=p_{k+1},\\
			p_{k,{s-1}}&=\mu_sh\nabla_yH(u_{k,{s-1}},y_{k,{s-1}},p_{ks})+\nu_sp_{ks},\\
			p_{k,{s-j}}&=\mu_{s-j+1} h \nabla_yH(u_{k,{s-j}},y_{k,{s-j}},p_{k,{s-j+1}})+\nu_{s-j+1}p_{k,{s-j+1}}\\&+(1-\nu_{s-j+2})p_{k,{s-j+2}},\quad j=2,\dots,s-1,\\
			p_{k0}&= \mu_1h\nabla_yH(u_{k0},y_{k0},p_{k1})+p_{k1}+(1-
			\nu_2)p_{k2},\\
			p_k&=p_{k0},\\
			&\nabla_uH(u_{k,{s-j}},y_{k,{s-j}},p_{k,{s-j+1}})=0,\quad j=1,\dots,s,
		\end{split}
	\end{equation}
	where $k=0,\dots,N-1$. In the above system, observe that the steps $p_{ki}$ of the double adjoint are not internal stages of a Runge-Kutta method, that is because they are not $\mathcal{O}(h)$ perturbations of the $p_{k+1}$, i.e. $p_{ki} \neq p_{k+1}+\bigo(h)$, for instance, already for the first step $p_{k,{s-1}}=\nu_sp_{k+1}+\bigo(h)$ with $\nu_s=2+\bigo(\eta)$. Since the pseudo-Hamiltonian $H(u,y,p)$ is linear in $p$, we can rescale the internal stages of the costate by a factor $\alpha_j$ such that for $\hat p_{kj}:=\alpha_j^{-1}p_{kj}$, we obtain $\hat p_{kj}=p_{k+1}+\bigo(h)$.
	We define
	\begin{equation*}
		\hat{p}_{ks}:=p_{ks},\quad
		\hat{p}_{k,{s-1}}:=\frac{p_{k,{s-1}}}{\nu_s}=\hat p_{ks}+\frac{\mu_s}{\nu_s}h\nabla_yH(u_{k,{s-1}},y_{k,{s-1}},\hat p_{ks}).
	\end{equation*}
	Substituting $\hat p_{k,{s-2}}$ in \eqref{eq:chebda0}, we obtain
	$$p_{k,{s-2}}=\mu_{s-1}\nu_sh\nabla_yH(u_{k,{s-2}},y_{k,{s-2}},\hat p_{k,{s-1}})+\nu_{s-1}\nu_s \hat p_{k,{s-1}}+(1-\nu_s)\hat p_{ks},$$
	the quantities $\hat p_{k,{s-1}}$ and $\hat p_{ks}$ are equal to $p_{k+1}+\bigo(h)$, hence $p_{k,{s-2}}=(\nu_{s-1}\nu_s+1-\nu_s)p_{k+1}+\bigo(h)$, this implies that $\alpha_{s-2}=\nu_{s-1}\nu_s+1-\nu_s$ and therefore
	$$\hat p_{k,{s-2}} := \frac{p_{k,{s-2}}}{\nu_{s-1}\nu_s+1-\nu_s}=\frac{p_{k,{s-2}}}{\nu_{s}(\nu_{s-1}-1)+1}.$$
	Following this procedure for $p_{k,{s-j}}, j=2,\dots,s-1$, we arrive at the Runge-Kutta formulation \eqref{eq:chebda} of the double adjoint of scheme \eqref{eq:cheboc}, where we go back to the notation $p_{ki}$ instead of $\hat p_{ki}$. 
\end{proof}
\begin{remark} \label{rem:nodamp}
	A straightforward calculation yields that without damping (for $\eta=0$), we have 
	$\alpha_{s-j}=j+1$, and
	method \eqref{eq:cheboc}-\eqref{eq:chebda} reduces to the following recurrence 
	\begin{equation}\label{eq:eta0}
		\begin{split}
			y_{k0}&=y_k,\quad
			y_{k1}=y_{k0}+\frac{h}{s^2}f(u_{k0},y_{k0}),\\
			y_{ki}&=\frac{2h}{s^2}f(u_{k,{i-1}},y_{k,{i-1}})+2y_{k,{i-1}}-y_{k,{i-2}},\quad i=2,\ldots,s,\\
			y_{k+1}&=y_{ks},\\
			p_N &= \nabla\Psi(y_N),\quad p_{ks}=p_{k+1},\\
			p_{k,{s-1}}&=p_{ks}+\frac{h}{s^2}\nabla_yH(u_{k,{s-1}},y_{k,{s-1}},p_{ks}),\\
			p_{k,{s-j}}&=\frac{2j}{(j+1)s^2} h \nabla_yH(u_{k,{s-j}},y_{k,{s-j}},p_{k,{s-j+1}})+\frac{2j}{j+1}p_{k,{s-i+1}}\\&+\frac{1-j}{j+1}p_{k,{s-j+2}},\quad j=2,\dots,s-1, \\
			p_{k0}&= \frac{h}{s}\nabla_yH(u_{k0},y_{k0},p_{k1})+sp_{k1}+(1-s)p_{k2},\\
			p_k&=p_{k0},\\
			&\nabla_uH(u_{k,{s-j}},y_{k,{s-j}},p_{k,{s-j+1}})=0,\quad j=1,\dots,s.
		\end{split}
	\end{equation}
	where $k=0,\dots,N-1$. In Section \ref{sec:stability}, we shall study the stability of \eqref{eq:eta0} (without damping) and of \eqref{eq:cheboc}-\eqref{eq:chebda} (with damping). 
\end{remark}

\subsection{RKC method of order 2}\label{sec:rkcda}

We consider the new implementation \eqref{eq:rkcnew} of the RKC method applied to \eqref{eq:P} for which the internal stages behave similarly to that of the order one method, given by
\begin{equation} \label{eq:rkcoc}
	\begin{split}
		\min&~\Psi(y_N),~such~that\\
		y_{k0} &= y_{k},\quad
		y_{k1} = y_{k0}+ \mu_1 h f(u_{k0},y_{k0}),\\
		y_{ki} &= \mu_ihf(u_{k,{i-1}},y_{k,{i-1}})+\nu_iy_{k,{i-1}}+(1-\nu_i)y_{k,{i-2}},\quad i=2,\ldots,s\\
		y_{k+1} &= a_sy_{k0}+b_sT_s(\omega_{0})y_{ks},
	\end{split}
\end{equation}
where, $k=0,\dots,N-1,~ \eta=0.15,$ and again all the parameters are defined as for the Chebyshev method using $\omega_2$ instead of $\omega_{1}$.
The order two RKC method with formulation \eqref{eq:rkcoc} for the state equation can be implemented 
using Algorithm \ref{algo:iter}, the costate equation
being implemented using the recurrence relations given by the following Theorem \ref{th:rkc}. Its proof is analogous to that of Theorem \ref{th:cheb} and thus omitted.
\begin{theorem}
	The double adjoint of the scheme \eqref{eq:rkcoc} is given by the recurrence 
	\begin{equation}\label{eq:rkcda}
		\begin{split}
			p_N &= \nabla\Psi(y_N),\quad p_{ks}=p_{k+1},\\
			p_{k,{s-1}}&=p_{ks}+\frac{\mu_s}{\nu_s}h\nabla_yH(u_{k,{s-1}},y_{k,{s-1}},p_{ks}),\\
			p_{k,{s-j}}&=\frac{\mu_{s-j+1}\alpha_{s-j+1}}{\alpha_{s-j}} h \nabla_yH(u_{k,{s-j}},y_{k,{s-j}},p_{k,{s-j+1}})\\&+\frac{\nu_{s-j+1}\alpha_{s-j+1}}{\alpha_{s-j}}p_{k,{s-j+1}},\\
			&+\frac{(1-\nu_{s-j+2})\alpha_{s-j+2}}{\alpha_{s-j}}p_{k,{s-j+2}},\quad j=2,\dots,s-1,\\
			p_{k0}&= \mu_1\alpha_1h\nabla_yH(u_{k0},y_{k0},p_{k1})+\alpha_1p_{k1}+(1-
			\nu_2)\alpha_2p_{k2}+a_sp_{k+1},\\
			p_k&=p_{k0},\\
			&\nabla_uH(u_{k,{s-j}},y_{k,{s-j}},p_{k,{s-j+1}})=0,\quad j=1,\dots,s,
		\end{split}
	\end{equation}
	where the coefficients $\alpha_j$ are defined using the induction
	\begin{equation}\label{eq:alphaind2}
		\begin{split}
			\alpha_s&=b_sT_s(\omega_{0}),\quad
			\alpha_{s-1}=\nu_s\alpha_s,\\
			\alpha_{s-j}&=\nu_{s-j+1}\alpha_{s-j+1}+(1-\nu_{s-j+2})\alpha_{s-j+2},~~j=2\dots s-1.
		\end{split}
	\end{equation}
	\label{th:rkc}
\end{theorem}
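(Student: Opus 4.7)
The plan is to mimic the Lagrangian argument used in the proof of Theorem \ref{th:cheb}, adapting it to the modified final update $y_{k+1}=a_sy_{k_0}+b_sT_s(\omega_0)y_{k_s}$ of \eqref{eq:rkcoc}. First I would write the Lagrangian associated to the discrete optimization problem \eqref{eq:rkcoc},
\begin{equation*}
\begin{split}
\mathcal{L} &= \Psi(y_N)+p_0\cdot(y^0-y_0)+\sum_{k=0}^{N-1}\Big\{p_{k+1}\cdot(a_sy_{k_0}+b_sT_s(\omega_0)y_{k_s}-y_{k+1})\\
&-p_{k_0}\cdot(y_k-y_{k_0})+p_{k_1}\cdot(y_{k_0}+\mu_1hf(u_{k_0},y_{k_0})-y_{k_1})\\
&+\sum_{i=2}^{s}p_{k_i}\cdot(\mu_ihf(u_{k_{i-1}},y_{k_{i-1}})+\nu_iy_{k_{i-1}}+(1-\nu_i)y_{k_{i-2}}-y_{k_i})\Big\},
\end{split}
\end{equation*}
and then set the partial derivatives with respect to $y_k,y_{k_i},p_k,p_{k_i},u_{k_i}$ to zero as in \eqref{eq:lag}. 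The two places where the calculation differs from that of Theorem \ref{th:cheb} are the derivatives with respect to $y_{k_s}$ and $y_{k_0}$: the former yields $p_{k_s}=b_sT_s(\omega_0)p_{k+1}$ (in place of $p_{k_s}=p_{k+1}$), while the latter produces an extra additive contribution $a_sp_{k+1}$ in the relation defining $p_{k_0}$. All other internal relations for $p_{k_{s-j}}$ with $j=1,\ldots,s-1$ remain structurally identical to the Chebyshev case.

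Next I would carry out the same rescaling trick as in the proof of Theorem \ref{th:cheb}: since the raw costate variables $p_{k_i}$ produced by the Lagrangian are not $\mathcal{O}(h)$ perturbations of $p_{k+1}$, I define $\hat p_{k_j}:=\alpha_j^{-1}p_{k_j}$ where the normalizing factors $\alpha_j$ are chosen so that $\hat p_{k_j}=p_{k+1}+\bigo(h)$. Exploiting the linearity of $H(u,y,p)$ in $p$, a direct substitution propagates the scaling through the three-term recurrence. The initial factor is now $\alpha_s=b_sT_s(\omega_0)$ (this is exactly where the new final update enters), and $\alpha_{s-1}=\nu_s\alpha_s$ follows from the relation $p_{k_{s-1}}=\mu_sh\nabla_yH+\nu_sp_{k_s}$. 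For $j=2,\ldots,s-1$, plugging $p_{k_{s-j+1}}=\alpha_{s-j+1}\hat p_{k_{s-j+1}}$ and $p_{k_{s-j+2}}=\alpha_{s-j+2}\hat p_{k_{s-j+2}}$ into the raw equation and matching leading order in $h$ recovers the induction \eqref{eq:alphaind2}, and dividing through by $\alpha_{s-j}$ reproduces the coefficients in \eqref{eq:rkcda}.

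The final step is the treatment of $p_{k_0}$. Here the raw equation reads (after scaling the internal stages) $p_{k_0}=\mu_1\alpha_1h\nabla_yH(u_{k_0},y_{k_0},\hat p_{k_1})+\alpha_1\hat p_{k_1}+(1-\nu_2)\alpha_2\hat p_{k_2}+a_sp_{k+1}$, where the last term arises from the $a_sy_{k_0}$ contribution in $y_{k+1}$ and is \emph{not} rescaled by any $\alpha$; the identity $p_k=p_{k_0}$ then comes from $\partial\mathcal{L}/\partial y_k=0$. Reverting to the notation $p_{k_i}$ in place of $\hat p_{k_i}$ yields \eqref{eq:rkcda}.

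The main obstacle, conceptually rather than computationally, is pinning down the correct base case $\alpha_s=b_sT_s(\omega_0)$ and tracking the unscaled $a_sp_{k+1}$ residual in the $p_{k_0}$ equation; once these two modifications are in place, the rest of the induction is mechanical and formally identical to the Chebyshev case handled in Theorem \ref{th:cheb}.
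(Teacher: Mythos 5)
Your proposal is correct and follows exactly the route the paper intends: the paper omits this proof as ``analogous to that of Theorem \ref{th:cheb}'', and you carry out precisely that analogous Lagrangian-plus-rescaling argument, correctly identifying the only two modifications forced by the new final update $y_{k+1}=a_sy_{k_0}+b_sT_s(\omega_0)y_{k_s}$, namely the base case $\alpha_s=b_sT_s(\omega_0)$ from $\partial\mathcal{L}/\partial y_{k_s}=0$ and the unscaled residual $a_sp_{k+1}$ in the $p_{k_0}$ equation from $\partial\mathcal{L}/\partial y_{k_0}=0$.
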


In Figure \ref{fig:dastages}, we plot the stability function and the internal stages of the double adjoint \eqref{eq:chebda} of Chebyshev \eqref{eq:Cheb1} and the double adjoint \eqref{eq:rkcda} of RKC \eqref{eq:rkcnew}. Comparing with Figures \ref{fig:stagescheb}b and \ref{fig:stagesrkc}b, we observe that the internal stages are not the same for the double adjoint methods compared to the \eqref{eq:Cheb1} and \eqref{eq:rkcnew}, while the stability function itself is identical as shown in Theorem \ref{th:stabfun} for a general Runge-Kutta method.
\begin{figure}
	\centering
	\begin{subfigure}{0.49\linewidth}
		\includegraphics[width=\linewidth]{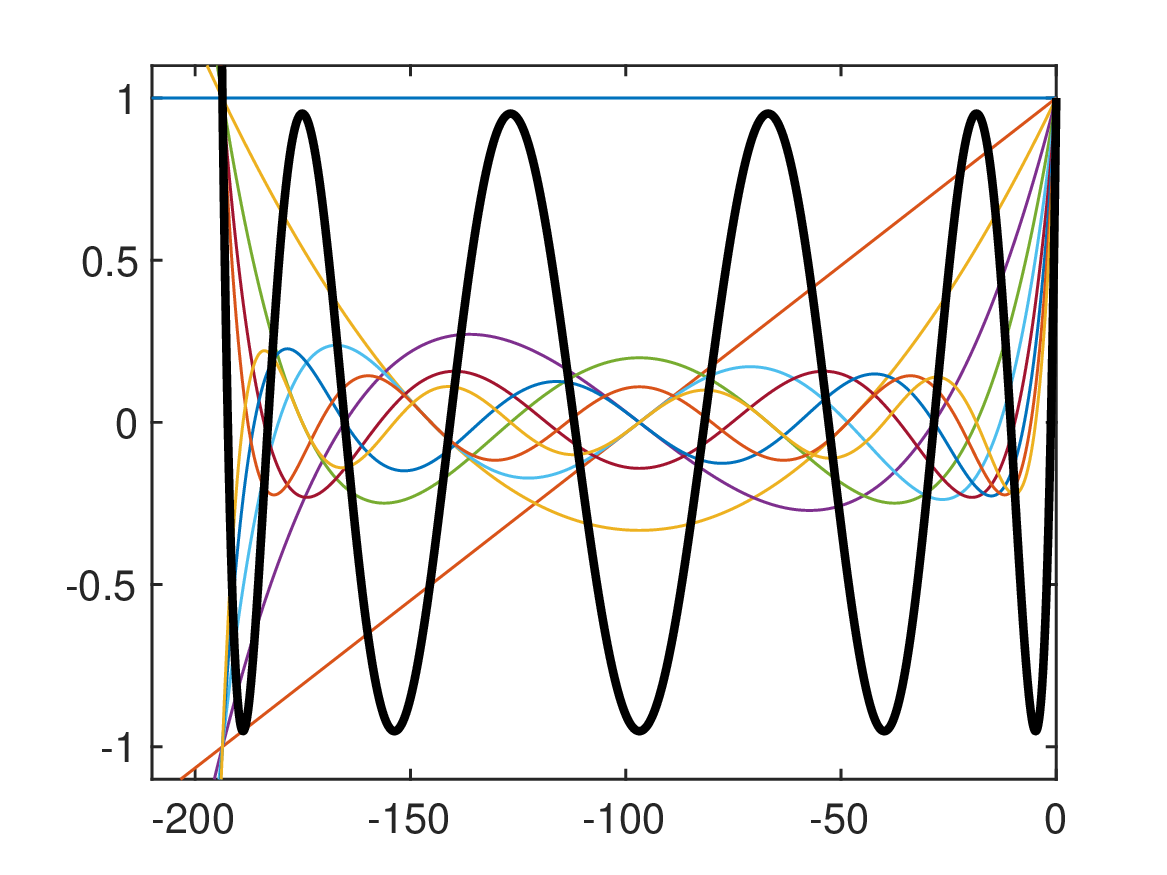}
		\caption{Double adjoint \eqref{eq:chebda} for $\eta=0.05$}
		\label{fig:stagechebda}
	\end{subfigure}
	\begin{subfigure}{0.49\linewidth}
		\includegraphics[width=\linewidth]{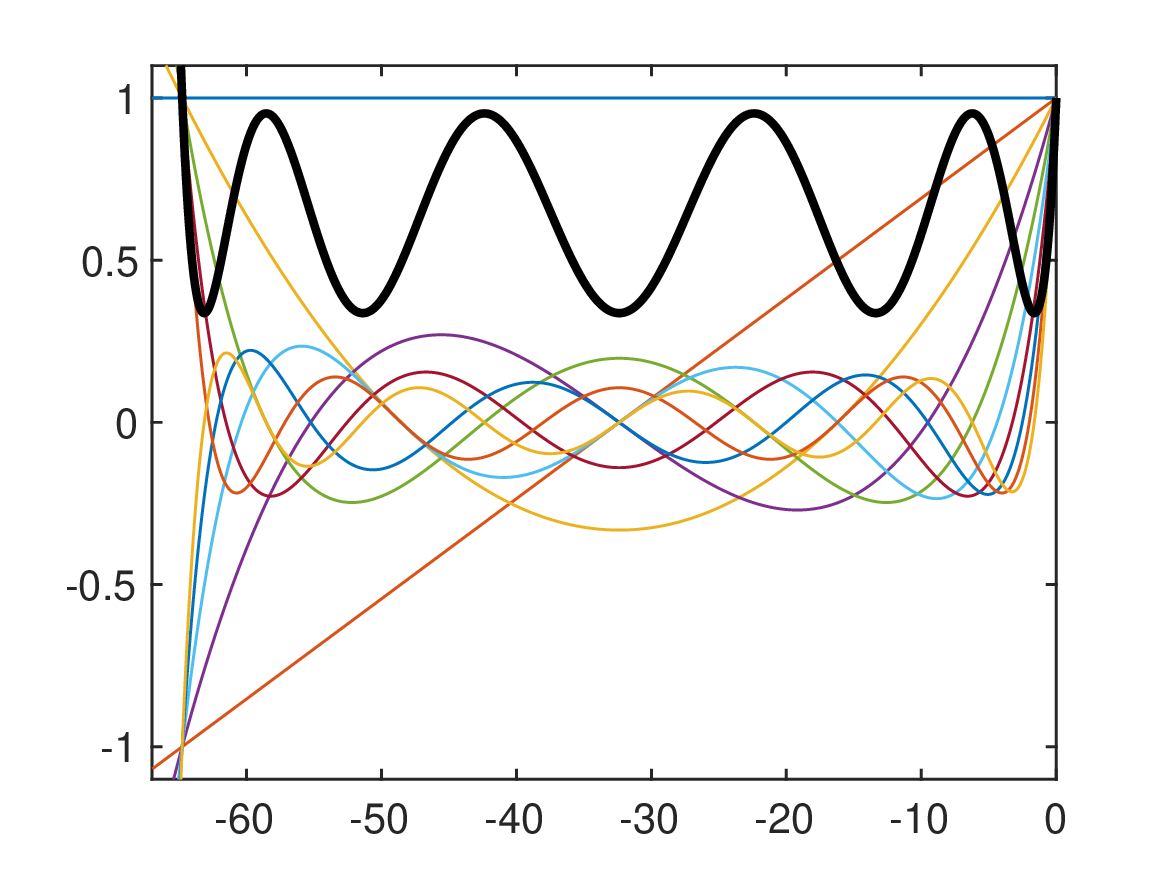}
		\caption{Double adjoint \eqref{eq:rkcda} for $\eta=0.15$}
		\label{fig:rkcdastages}
	\end{subfigure}
	\caption{Internal stages (thin curves) and stability polynomial (bold curve) of the double adjoint of the Chebyshev method \eqref{eq:chebda} of order one and the RKC method \eqref{eq:rkcda} of order two for $s=10$ internal stages.}
	\label{fig:dastages}
\end{figure}

\subsection{Stability and convergence analysis}\label{sec:stability}

In this section, we study the stability of the double adjoint of the Chebyshev method \eqref{eq:cheboc} and the RKC method \eqref{eq:rkcoc}. We recall that for the Chebyshev method of order one (resp. RKC of order two) the stability domain contains the interval $[-\beta(s,\eta),0]$  where $\beta_{Cheb}(s,\eta) \approx (2-4\eta/3)s^2$ (resp. $\beta_{RKC}(s,\eta)\approx 0.653s^2$ for $\eta = 0.15$).

\begin{theorem}\label{th:stab}
	Consider the Chebyshev \eqref{eq:cheboc} and the RKC \eqref{eq:rkcoc} methods. For $\eta=0$, the stability functions of the internal stages $R_{s,i}(z)$ of the Chebyshev (resp. RKC) double adjoint \eqref{eq:chebda} (resp. \eqref{eq:rkcda}), are bounded by $1$ for all $z\in[-2s^2,0]$ (resp. $[-\frac23 s^2+\frac23,0]$) and all $s\in \N$.
\end{theorem}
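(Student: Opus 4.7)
The strategy is to apply the double-adjoint recurrences of Theorems~\ref{th:cheb} and~\ref{th:rkc} to the linear test equation $\dot y=\lambda y$, for which $H(u,y,p)=\lambda p^Ty$ and $\nabla_yH=\lambda p$. The $p_{k_i}$ relations then reduce to scalar recurrences in $z=h\lambda$, and my aim is to identify the stage-stability polynomials, after a suitable affine rescaling, with the normalized Chebyshev polynomials of the second kind $U_j(x)/(j+1)$, for which the classical bound $|U_j(x)|\le j+1$ on $[-1,1]$ yields the desired estimate.

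\textbf{Chebyshev case.} For $\eta=0$ the coefficients in \eqref{eq:eta0} satisfy $\nu_i=2$ and $\mu_i=2/s^2$ (for $i\ge 2$), together with $\mu_1=1/s^2$ and $\alpha_{s-j}=j+1$. Setting $q_j(z):=R_{s,s-j}(z)=p_{k_{s-j}}/p_{k+1}$ and $x:=1+z/s^2$, a direct substitution into \eqref{eq:chebda} shows $q_0=1$, $q_1=x$ and, after all cancellations, the middle-stage relations for $j=2,\ldots,s-1$ collapse to
\begin{equation*}
(j+1)\,q_j(z)\;=\;2jx\,q_{j-1}(z)\;-\;(j-1)\,q_{j-2}(z).
\end{equation*}
This is exactly the recurrence satisfied by $U_j(x)/(j+1)$, as one sees by dividing the standard three-term relation $U_j=2xU_{j-1}-U_{j-2}$ by $j+1$; the initial values $U_0/1,U_1/2$ also match. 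Hence $q_j(z)=U_j(x)/(j+1)$ for $j=0,\ldots,s-1$, and the elementary identity $U_j(\cos\theta)=\sin((j+1)\theta)/\sin\theta=\sum_{k=0}^{j}e^{i(j-2k)\theta}$ yields $|q_j(z)|\le 1$ on $x\in[-1,1]$, i.e.\ $z\in[-2s^2,0]$. The remaining full-step function $R_s(z)$ equals the original Chebyshev stability polynomial $T_s(x)$ by Theorem~\ref{th:stabfun}, and is bounded by $1$ on the same interval.

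\textbf{RKC case.} The argument proceeds in the same spirit once the rescaling coefficients are computed. At $\eta=0$ one has $\omega_0=1$, $\omega_2=3/(s^2-1)$, $\nu_i=2$ and $\mu_i=2\omega_2$ for $i\ge 2$, while the induction \eqref{eq:alphaind2} reduces to $\alpha_{s-j}=2\alpha_{s-j+1}-\alpha_{s-j+2}$ with $\alpha_s=b_s$, $\alpha_{s-1}=2b_s$; a short induction gives the closed form $\alpha_{s-j}=(j+1)b_s$. The common factor $b_s$ cancels in every ratio $\alpha_\bullet/\alpha_{s-j}$ appearing in \eqref{eq:rkcda}, so the same three-term recurrence reappears, now with $x:=1+\omega_2 z$, and the identification $\tilde q_j(z)=U_j(x)/(j+1)$ again holds. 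Since $\omega_2=3/(s^2-1)$, the condition $x\in[-1,1]$ translates into $z\in[-2/\omega_2,0]=[-\tfrac{2}{3}s^2+\tfrac{2}{3},0]$, giving the stated bound.

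\textbf{Main obstacle.} The only real work is algebraic: verifying that the a priori intricate ratios $\mu_{s-j+1}\alpha_{s-j+1}/\alpha_{s-j}$, $\nu_{s-j+1}\alpha_{s-j+1}/\alpha_{s-j}$ and $(1-\nu_{s-j+2})\alpha_{s-j+2}/\alpha_{s-j}$ simplify, once the closed form $\alpha_{s-j}\propto j+1$ is inserted, precisely to the coefficients $2j\omega/(j+1)$, $2j/(j+1)$, $-(j-1)/(j+1)$ that produce the three-term recurrence for $U_j/(j+1)$. Once this simplification is performed, the bound on the internal stages is a classical property of Chebyshev polynomials of the second kind and no further analytic difficulty remains.
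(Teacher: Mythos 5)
Your proof is correct, and it reaches the same scalar three-term recurrence $(j+1)q_j=2jx\,q_{j-1}-(j-1)q_{j-2}$ with $x=1+\omega z$ (for $\omega=1/s^2$, resp.\ $\omega=3/(s^2-1)$) that the paper derives, including the correct cancellation of the $\alpha$-rescaling (the paper states $\alpha_{s-j}=j+1$ in Remark~\ref{rem:nodamp}, and your $\alpha_{s-j}=(j+1)b_s$ for RKC is exactly what makes \eqref{eq:rkcda} collapse to the same recurrence). Where you diverge is in how the bound is then extracted. The paper expands each stage as $R_{s,i}(z)=\sum_{j=0}^i\gamma^i_jT_j(x)$, shows $\sum_j\gamma^i_j=1$, and proves nonnegativity of the $\gamma^i_j$ through the bespoke combinatorial Lemma~\ref{indgamma} on the rescaled coefficients $\tilde\gamma^i_j=(i+1)\gamma^i_j\in\{0,1,2\}$; the bound then follows from $|T_j(x)|\le1$. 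You instead identify the closed form $R_{s,i}(z)=U_i(x)/(i+1)$ with $U_i$ the Chebyshev polynomial of the second kind and invoke the classical bound $|U_i|\le i+1$ on $[-1,1]$. The two are in fact the same object seen from two sides: the paper's coefficient pattern ($\tilde\gamma^i_j=2$ for $i-j$ even, $\tilde\gamma^i_0=1$ for $i$ even) is precisely the classical $T$-expansion $U_i=2\sum_{j\ge1,\,i-j\ \mathrm{even}}T_j+T_0\cdot\mathds{1}_{\{i\ \mathrm{even}\}}$, so your closed form explains and subsumes Lemma~\ref{indgamma}. Your route is shorter and yields an explicit formula for the stage polynomials; the paper's is self-contained by induction and avoids importing properties of $U_j$. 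Both treat the terminal stage $i=s$ identically via Theorem~\ref{th:stabfun} (for RKC you leave this implicit, where the bound $|a_s+b_sT_s(x)|\le a_s+b_s=1$ is needed; worth one sentence, but not a gap).
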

The proof of Theorem \ref{th:stab} relies on the following lemma.
\begin{lemma}\label{indgamma}
	Let $s\geq 1$, and consider the double sequence 
	$\tilde \gamma^i_j$ 
	indexed by $i$ and $j$,
	\begin{equation}\label{eq:initial}
		\begin{split}
			\tilde \gamma^i_j&=0 ~\forall~ j>i, \qquad i=0,\dots,s-1,\\ 
			\tilde \gamma^0_0&=1,\quad \tilde \gamma^1_0=0,\quad \tilde \gamma^1_1=2,\\
		\end{split}
		~~~~~~~~~~~~~~~~~~~~~~~~~~~
	\end{equation}
	\begin{equation}\label{eq:induction}
		\begin{split}
			\tilde \gamma^i_0&=\tilde \gamma^{i-1}_1-\tilde \gamma^{i-2}_0,\quad
			\tilde \gamma^i_1=2\tilde \gamma^{i-1}_0+\tilde \gamma^{i-1}_2-\tilde \gamma^{i-2}_1\qquad i=2,\dots,s-1,\\
			\tilde \gamma^i_j&=\tilde \gamma^{i-1}_{j-1}+\tilde \gamma^{i-1}_{j+1}-\tilde \gamma^{i-2}_j\qquad i=2,\dots,s-1,~j=2,\dots,i,\\		
		\end{split}
	\end{equation}
	Then,
	\begin{equation}\label{eq:expcoeff}
		\begin{split}
			\tilde \gamma^i_j&=0 ~\forall~ j>i,\\
			\tilde \gamma^i_0 &= \begin{cases}
				1\quad \text{if $i$ is even}\\
				0\quad \text{if $i$ is odd}
			\end{cases}\quad 
			\tilde \gamma^i_j=\begin{cases}
				2\quad \text{if $i-j$ is even}\\
				0\quad \text{otherwise}
			\end{cases}
		\end{split}
	\end{equation}
	where  $i=0,\dots,s-1,~ j=0,\dots,i$.
\end{lemma}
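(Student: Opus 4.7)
The plan is to prove \eqref{eq:expcoeff} by strong induction on $i$. The base cases $i=0$ and $i=1$ follow directly from the initial conditions in \eqref{eq:initial}, which also already supply the triangularity statement $\tilde\gamma^i_j=0$ for $j>i$; so only the values for $0\le j\le i$ remain to be established for $i\ge 2$.

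For the inductive step, I would assume the explicit formula for indices $i-1$ and $i-2$ and treat the three branches of the recurrence in \eqref{eq:induction} separately. For $j=0$, splitting on the parity of $i$ and substituting the inductive hypothesis into $\tilde\gamma^i_0=\tilde\gamma^{i-1}_1-\tilde\gamma^{i-2}_0$ gives $2-1=1$ when $i$ is even and $0-0=0$ when $i$ is odd, matching the claim. For $j=1$ the same parity split applied to $\tilde\gamma^i_1=2\tilde\gamma^{i-1}_0+\tilde\gamma^{i-1}_2-\tilde\gamma^{i-2}_1$ yields $0$ when $i$ is even (all three contributions vanish) and $2\cdot 1+2-2=2$ when $i$ is odd. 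For $2\le j\le i$, the three terms in $\tilde\gamma^{i-1}_{j-1}+\tilde\gamma^{i-1}_{j+1}-\tilde\gamma^{i-2}_j$ all share the parity of $i-j$ by the inductive hypothesis, so they all contribute $2$ (giving $2+2-2=2$) when $i-j$ is even and all contribute $0$ otherwise.

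The main technical care is at the boundary $j=i$ (and to a lesser extent $j=i-1$), where the terms $\tilde\gamma^{i-1}_{j+1}$ and $\tilde\gamma^{i-2}_j$ fall outside the triangular support and vanish by \eqref{eq:initial}. One has to verify that the recurrence still delivers the claimed value in that case: for $j=i$ one gets $\tilde\gamma^i_i=\tilde\gamma^{i-1}_{i-1}+0-0=2$, in agreement with $i-j=0$ being even. There is no genuine obstruction here, only the need to tabulate boundary cases carefully; the whole calculation is driven by the parity of $i-j$.

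As a conceptual check that also provides an alternative route, the generating function $S_i(\theta):=\sum_{j=0}^i\tilde\gamma^i_j\cos(j\theta)$ predicted by \eqref{eq:expcoeff} equals the Dirichlet kernel $\sin((i+1)\theta)/\sin\theta=U_i(\cos\theta)$, where $U_i$ is the Chebyshev polynomial of the second kind. The recurrence \eqref{eq:induction} is exactly the encoding of the three-term Chebyshev identity $U_i(x)=2xU_{i-1}(x)-U_{i-2}(x)$ in the $\{\cos(j\theta)\}$ basis (using $2\cos\theta\cos(j\theta)=\cos((j+1)\theta)+\cos((j-1)\theta)$ with the special fold at $j=0$), which explains a priori why such a compact closed form should exist and offers a generating-function proof in case the direct induction is judged too cumbersome.
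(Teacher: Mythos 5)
Your proposal is correct and follows essentially the same route as the paper, which simply observes that the closed-form coefficients \eqref{eq:expcoeff} satisfy the recurrence \eqref{eq:induction} and share the initial data \eqref{eq:initial}, hence coincide with $\tilde\gamma^i_j$ by induction; you merely carry out the parity/boundary checks that the paper leaves to the reader, and they all come out right. The closing observation that \eqref{eq:induction} is the Chebyshev recurrence $U_i(x)=2xU_{i-1}(x)-U_{i-2}(x)$ written in the $\{\cos(j\theta)\}$ basis, so that $\sum_j\tilde\gamma^i_j\cos(j\theta)=U_i(\cos\theta)$, is a nice extra explanation of why the closed form exists, but it is not needed for the argument.
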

\begin{proof}
	It can be checked that the coefficients defined in \eqref{eq:expcoeff} verify the induction \eqref{eq:induction}. Hence using the fact that they have the same initial terms \eqref{eq:initial}, we conclude that they coincide by induction on $i$ and $j$.
\end{proof}
\begin{proof}[Proof of Theorem \ref{th:stab}]
	
	We first consider the Chebyshev method without damping applied to the linear test problem $y'=\lambda y~,\lambda\in \C,~t\in (0,T],~y(0)=1$, with a uniform subdivision $x_0=0<x_1<\dots<x_N=T$ of stepsize $h$.
	Using Remark \ref{rem:nodamp}, we obtain for $k=0$:
	\begin{equation}\label{eq:chebda0proof}
		\begin{split}
			y_{k0}&=1,\quad
			y_{k1}=y_{k0}+\frac{h\lambda}{s^2}y_{k0},\\
			y_{ki}&=\frac{2ih\lambda}{(i+1)s^2}y_{k,{i-1}}+\frac{2i}{i+1}y_{k,{i-1}}+\frac{1-i}{i+1}y_{k,{i-2}},\quad i=2,\ldots,s-1,\\
			y_{ks}&=\frac{h\lambda}sy_{k,{s-1}}+sy_{k,{s-1}}+(1-s)y_{k,{s-2}},\\
			y_1&=y_{ks}.\\
		\end{split}
	\end{equation}
	First, notice that since $\frac{2i}{i+1}+\frac{1-i}{i+1}=1$, we have that for all $i=0,\dots,s$, $y_{ki}=(1+\mathcal{O}(h))$ (by induction). Setting $z=h\lambda$, it is sufficient to prove the identity
	\begin{equation}\label{eq:sum}
		y_{ki}=\sum_{j=0}^{i} \gamma^i_jT_{j}(1+\frac z{s^2})
	\end{equation}
	where $y_{ki}$ is a convex combination of the polynomials $T_{j}(1+\frac z{s^2})$,
	\begin{equation}\label{eq:sum1}
		\sum_{j=0}^{i}\gamma^i_j=1 \text{ and } \gamma^i_j\geq0~ \forall~ i,j=1,\dots s-1,
	\end{equation}
	because $|T_j(1+\frac z{s^2})|\leq 1$ for all $j=0,\dots,s$ and $z\in[-\beta(s,0),0]=[-2s^2,0]$.\\
	Since the Chebyshev polynomials form a basis of the vector space of polynomials, this already justifies the existence of the expansion \eqref{eq:sum} with some real coefficients $\gamma^i_j$. The identity $\sum_{j=0}^{i}\gamma^i_j=1$ follows from the fact that $y_{ki}=1+\mathcal{O}(h)$ and $T_j(1+\frac{z}{s^2})=1+\mathcal{O}(h)$ for all $i,j=0,\dots,s$.
	Now we can calculate these coefficients for the first two internal stages,
	$R_{s,0}(z)=y_{k0}=y_k=1=T_0(1+\frac z{s^2})$, thus $\gamma^0_0=1$.
	Analogously, $R_{s,1}(z)=y_{k1}=y_{k0}+\frac{h\lambda}{s^2}y_{k0}=1+\frac{z}{s^2}=T_1(1+\frac z{s^2})\text{ we obtain }\gamma^1_0=0,~\gamma_1^1=1.$\\
	It remains to prove the positivity of the coefficients $\gamma^i_j$. Coupling \eqref{eq:sum} and \eqref{eq:sum1}, we obtain
	\begin{equation*}
		\begin{split}
			R_{s,i}(z)=y_{ki}
			&=\frac{2i}{i+1}(1+\frac z{s^2})y_{k,{i-1}}+\frac{1-i}{i+1}y_{k,{i-2}}\\
			&=\frac{2i}{i+1}(1+\frac z{s^2})\sum_{j=0}^{i-1} \gamma^{i-1}_jT_{j}(1+\frac z{s^2}) + \frac{1-i}{i+1}\sum_{j=0}^{i-2} \gamma^{i-2}_jT_{j}(1+\frac z{s^2})\\
			&=\frac {2i}{i+1}\gamma^{i-1}_0T_1(1+\frac z{s^2}) + 
			\sum_{j=2}^{i}\frac i{i+1}\gamma^{i-1}_{j-1}T_{j}(1+\frac z{s^2})\\
			&+\sum_{j=2}^{i}\left(\frac i{i+1}\gamma^{i-1}_{j-1}-\frac{i-1}{i+1}\gamma^{i-2}_{j-2}\right)T_{j-2}(1+\frac z{s^2})
		\end{split}
	\end{equation*}
	where we used \eqref{eq:recTU1}.
	By comparison with \eqref{eq:sum}, we obtain the following recurrence
	\begin{equation*}
		\begin{split}
			\gamma^i_j&=0 ~\forall~ j>i, \qquad i=0,\dots,s-1,\\
			\gamma^0_0&=1,\quad \gamma^1_0=0,\quad \gamma^1_1=1,\quad
			\gamma^i_0=\frac i{i+1}\gamma^{i-1}_1-\frac{i-1}{i+1}\gamma^{i-2}_0 \qquad i=2,\dots,s-1,\\
			\gamma^i_1&=\frac{2i}{i+1}\gamma^{i-1}_0+\frac i{i+1}\gamma^{i-1}_2-\frac{i-1}{i+1}\gamma^{i-2}_1\qquad i=2,\dots,s-1,\\
			\gamma^i_j&=\frac{i}{i+1}\gamma^{i-1}_{j-1}+\frac i{i+1}\gamma^{i-1}_{j+1}-\frac{i-1}{i+1}\gamma^{i-2}_j\qquad i=2,\dots,s-1,~j=2,\dots,i.\\		
		\end{split}
	\end{equation*}
	Now defining $\tilde{\gamma}^i_j=(i+1)\gamma^i_j$, the above induction relations 
	simplify to \eqref{eq:initial} and \eqref{eq:induction}.
	The positivity of $\tilde{\gamma}^i_j$, and hence of $\gamma^i_j$, follows from Lemma \ref{indgamma}. For $i=s$, the stability is a consequence of Theorem \ref{th:stabfun}.
	
	Analogously, the RKC method reads for $\eta=0$,
	\begin{equation}
		\begin{split}
			y_{k0}&=1,\quad
			y_{k1}=y_{k0}+\frac{3h\lambda}{s^2-1}y_{k0},\\
			y_{ki}&=\frac{6ih\lambda}{(i+1)(s^2-1)}y_{k,{i-1}}+\frac{2i}{i+1}y_{k,{i-1}}+\frac{1-i}{i+1}y_{k,{i-2}},\quad i=2,\ldots,s-1,\\
			y_{ks}&=\frac{h\lambda}sy_{k,{s-1}}+\frac{s^2-1}{3s}y_{k,{s-1}}-\frac{s^3-s^2-s+1}{3s^2}y_{k,{s-2}}+\frac{2s^2+1}{3s^2}y_0,\\
			y_1&=y_{ks},\\
		\end{split}
	\end{equation}
	and we follow the same methodology as above. Note that for RKC we have $|T_j(1+\frac 3{s^2-1}z)|\leq 1$ for all $j=0,\dots,s$ and $z\in[-\beta(s,0),0]=[-\frac2{3}(s^2-1),0]$. Using the same notations we search for coefficients satisfying the following
	\begin{equation}\label{eq:ind2}
		y_{ki}=\sum_{j=0}^{i} \gamma^i_jT_{j}(1+\frac {3}{s^2-1}z),~\text{where}~\sum_{j=0}^{i}\gamma^i_j=1 \text{ and } \gamma^i_j\geq0~ \forall~ i,j=1,\dots s-1.
	\end{equation}
	Remark that $R_{s,0}(z)=y_{k0}=y_k=1=T_0(1+\frac 3{s^2-1}z)$, hence $\gamma^0_0=1$.
	Analogously, $R_{s,1}(z)=y_{k1}=y_{k0}+\frac{3h\lambda}{s^2-1}y_{k0}=1+\frac{3z}{s^2-1}=T_1(1+\frac 3{s^2-1}z)$, and we deduce that $\gamma^1_0=0,~\gamma_1^1=1.$\\
	Again we find a relation between these new coefficients to prove their positivity using
	$
	y_{ki}=\frac{2i}{i+1}(1+\frac 3{s^2-1}z)y_{k,{i-1}}+\frac{1-i}{i+1}y_{k,{i-2}}.		
	$
	We get a recurrence of the same form as in the Chebyshev double adjoint method \eqref{eq:chebda0proof} but with different parameter, proceeding in the same we obtain exactly the same coefficients $\gamma^i_j$, which concludes the proof.
\end{proof}
\begin{remark}
	For the case of positive damping $\eta>0$, the coefficients get very complicated and it is difficult to find a recurrence relation between them in order to prove their positivity. However, observing that all the coefficients in the recurrence relations of the internal stages of the methods are continuous functions of $\eta$, then for all $s$, there exists $\eta_0(s)$ such that the internal stages are stable (bounded) for all $\eta\in[0,\eta_0(s)]$.
	Numerical investigations suggest that Theorem \ref{th:stab} remains valid for all $\eta>0$ i.e the methods remain stable with the stability functions of the internal stages bounded by $1$, for all integers $s\geq1$ for Chebyshev and $s\geq2$ for RKC, and all $\eta>0$. We have verified this numerically for $ s\leq 200$.
\end{remark}

We conclude this section by the following convergence theorem for the new explicit stabilized methods for stiff optimal control problems.
\begin{theorem}\label{th:conv}
	The method \eqref{eq:cheboc}-\eqref{eq:chebda} (resp. \eqref{eq:rkcoc}-\eqref{eq:rkcda}) has order 1 (resp. 2) of accuracy for the optimal control problem \eqref{eq:P}.
\end{theorem}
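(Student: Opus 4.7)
The plan is to deduce Theorem \ref{th:conv} from Theorem \ref{th:hager} of Hager and Bonnans--Laurent-Varin. The crucial observation is that, by construction, the double adjoint recurrences \eqref{eq:chebda} and \eqref{eq:rkcda} were derived in Theorems \ref{th:cheb} and \ref{th:rkc} as numerically stable reformulations of the discrete optimality conditions \eqref{eq:doc} obtained when one applies the Chebyshev scheme \eqref{eq:Cheb1} (resp. the new RKC scheme \eqref{eq:rkcnew}) to the state equation of \eqref{eq:P}. Hence the schemes \eqref{eq:cheboc}--\eqref{eq:chebda} and \eqref{eq:rkcoc}--\eqref{eq:rkcda} coincide with the partitioned Runge--Kutta discretization of the continuous optimality system \eqref{eq:oc} produced by the couple $(a_{ij},b_i)$--$(\hat a_{ij},\hat b_i)$, with $\hat a_{ij},\hat b_i$ given by \eqref{eq:symp}, precisely in the framework of Theorem \ref{th:hager}.

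To apply Theorem \ref{th:hager} I would first verify its hypotheses for each method: (i) the Butcher weights $b_i$ of the underlying scheme are all nonzero so that the symplecticity condition \eqref{eq:symp} is well-defined; (ii) $\nabla_u^2 H$ is invertible along the exact trajectory, which is the standing assumption of \eqref{eq:P}; (iii) the ODE order $p_{ode}$ of the method is known. Point (iii) is $p_{ode}=1$ for the Chebyshev method by construction. For the new RKC formulation \eqref{eq:rkcnew}, Remark \ref{rk:oldnewrkc} guarantees the same order-two stability polynomial $R_s^\eta(z)=1+z+z^2/2+\bigo(z^3)$ as the classical RKC, and the footnote observation that up to order two the order conditions for nonlinear problems reduce to those for linear problems yields $p_{ode}=2$.

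With the hypotheses in place, Theorem \ref{th:hager} gives $p_{oc}=p^\star_{ode}\leq p_{ode}$. For the Chebyshev scheme of order $p_{ode}=1$, mere consistency ($\sum_i b_i=1$) forces $p^\star_{ode}\geq 1$, so $p_{oc}=1$. For the RKC scheme of order $p_{ode}=2$, the two linear order-two conditions $\sum_i b_i=1$ and $\sum_{i,j} b_i a_{ij}=\tfrac12$ automatically hold, and by the second assertion of Theorem \ref{th:hager} they are exactly what is needed to obtain $p^\star_{ode}\geq 2$; combined with the trivial upper bound $p^\star_{ode}\leq p_{ode}=2$, this yields $p_{oc}=2$.

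The main obstacle in making this plan rigorous is point (i): passing from the two-term recurrence formulations \eqref{eq:Cheb1} and \eqref{eq:rkcnew} to an explicit Butcher tableau and checking that the induced weight $b_i$ is nonzero for every $i=1,\dots,s$. Informally, this follows from the fact that the internal stability functions $R_{s,i}^{\eta}(z)$ of the Chebyshev and new RKC schemes have exact degree $i$ in $z$, so stage $i$ must contribute $F(y_{k_{i-1}})$ to the final update with a nonzero coefficient; one also checks that the affine combination $y_{k+1}=a_s y_{k_0}+b_s T_s(\omega_0)y_{k_s}$ used in \eqref{eq:rkcnew} preserves this property, as both $a_s$ and $b_s T_s(\omega_0)$ are nonzero by \eqref{eq:omegarkc}. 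Once (i) is established, the rest of the argument is a direct invocation of Theorem \ref{th:hager}.
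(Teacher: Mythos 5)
Your proposal is correct and follows essentially the same route as the paper, whose proof of Theorem \ref{th:conv} is a one-line appeal to Theorem \ref{th:hager} with $p_{oc}=p_{ode}$ (equal to $1$ for Chebyshev and $2$ for RKC). You are in fact more careful than the paper, which does not explicitly verify the hypothesis $b_i\neq 0$ nor spell out that the recurrences \eqref{eq:chebda} and \eqref{eq:rkcda} realize the symplectic partitioned discretization \eqref{eq:doc} required by that theorem (the latter point being the content of Theorems \ref{th:cheb} and \ref{th:rkc}).
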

\begin{proof}
	The proof follows immediately from Theorem \ref{th:diagram} with $p_{oc}=p_{ode}=2$ for the RKC method.
\end{proof}

\begin{remark}
	The proposed explicit stabilized integrators for optimal control pro\-blems could be combined with the idea of implicit-explicit (IMEX) integrators as proposed in \cite{HPS13}, where RKC type methods would replace the implicit part in the IMEX integrator. This idea is already proposed in \cite{Zb11,AV13} in the context of  advection-diffusion-reaction problems. In \cite{Zb11}, the diffusion part is  discretized  with an RKC method which typically has a large number of internal stages, and the advection-reaction part is integrated using a 4-stage explicit Runge-Kutta method. In \cite{AV13}, the method integrates the diffusion term using ROCK2 method, the advection term using a 3-stage explicit method, and the nonlinear reaction term is solved implicitly. Such an extension is however out of the scope of the paper.
\end{remark}

\section{Numerical experiments}
\label{sec:num}

In this Section, we illustrate numerically our theoretical findings of convergence and stability of the new fully explicit methods for stiff optimal control problems, first on a stiff three dimensional problem and second on a nonlinear diffusion advection PDE (Burgers equation).
\subsection{A linear quadratic stiff test problem}
We start this section by a simple test problem taken from \cite{Hager00}:
\begin{equation}\label{eq:hager}
	\begin{split}
		\min~ &\frac{1}{2} \int_{0}^{1}(u^2(t)+2x^2(t))dt \text{ subject to}\\
		&\dot x(t)=\frac12 x(t)+u(t),~t\in[0,1],~x(0)=1.
	\end{split}
\end{equation}
The optimal solution $(u^*,x^*)$ is given by
$
u^*(t)=\frac{2(e^{3t}-e^3)}{e^{3t/2}(2+e^3)}, \ts x^*(t)=\frac{2e^{3t}+e^3}{e^{3t/2}(2+e^3)}
$.
As studied in \cite{HPS13} we modify problem \eqref{eq:hager} into a singularly perturbed (stiff) problem to illustrate the good stability properties of our new method. For a fixed $\epsilon>0$, we consider the following stiff optimal control problem,
\begin{equation}\label{eq:imex44}
	\begin{split}
		\min~ &c(1) \text{ subject to}\\
		&\dot c(t)=\frac12 (u^2(t)+x^2(t)+4z^2(t)),~c(0)=0,\\
		&\dot x(t)=z(t)+u(t),~x(0)=1,\\
		&\dot z(t)=\frac1\epsilon \left(\frac12x(t)-z(t)\right),~z(0)=\frac12,
	\end{split}
\end{equation}

%

\begin{figure}[t]
	\centering
	\begin{subfigure}{0.49\linewidth}
		\includegraphics[width=\linewidth]{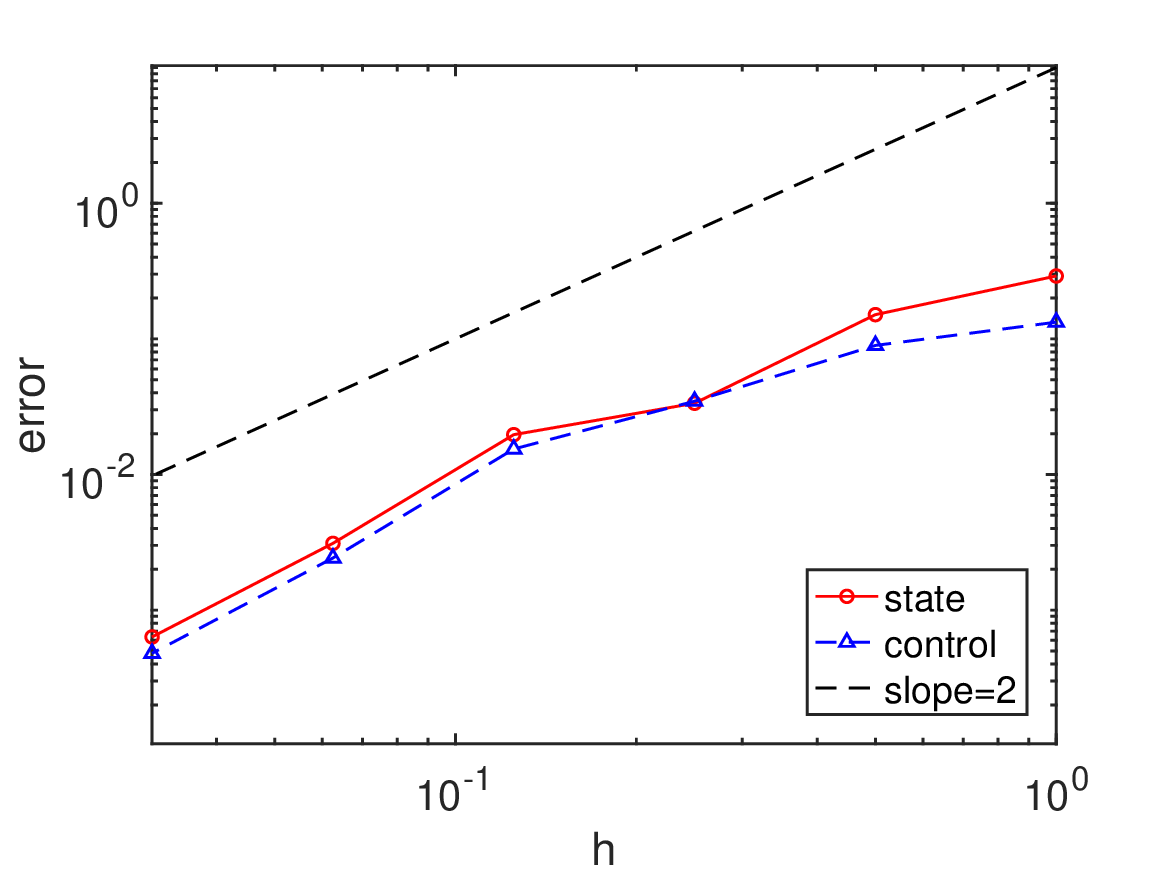}
		\caption{$\epsilon=10^{-1}$.}
	\end{subfigure}
	\begin{subfigure}{0.49\linewidth}
		\includegraphics[width=\linewidth]{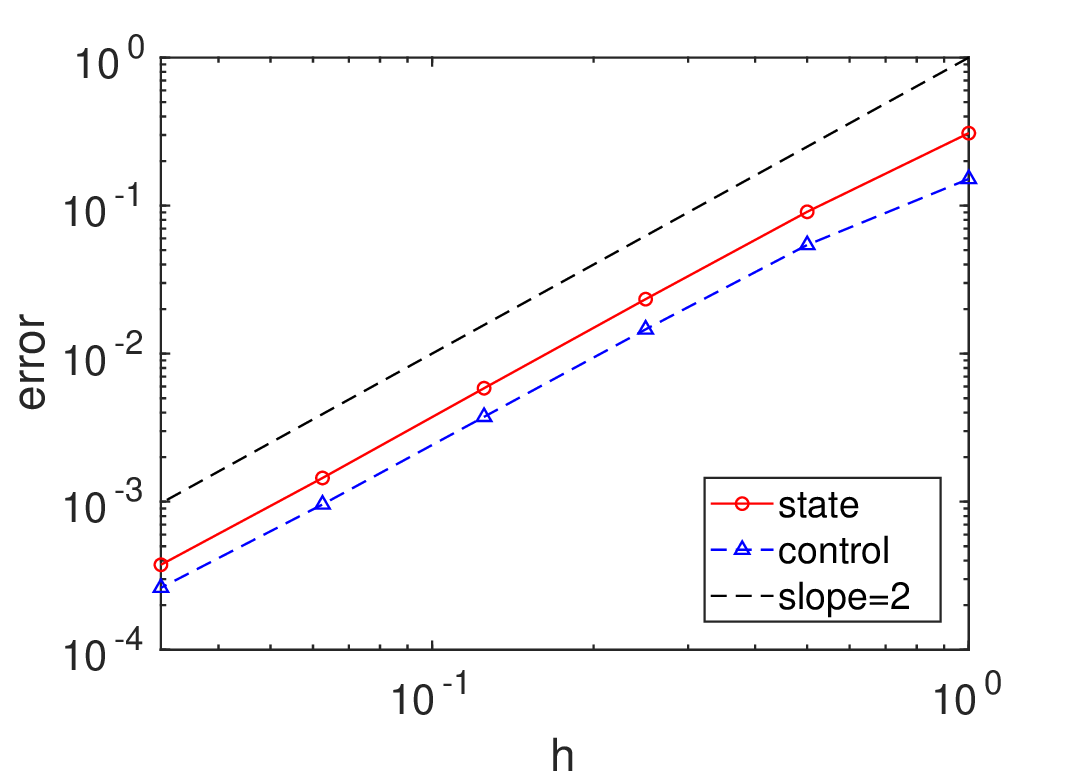}
		\caption{$\epsilon=10^{-3}$.}
	\end{subfigure}
	\caption{Convergence plot of RKC \eqref{eq:rkcoc}-\eqref{eq:rkcda} applied to problem \eqref{eq:imex44}.}
	\label{fig:plotconvpb1}
\end{figure}

Figure \ref{fig:plotconvpb1} shows the convergence behavior, using 
the new RKC method \eqref{eq:rkcoc}-\eqref{eq:rkcda}, of the error in infinity norm between the solutions of the stiff problem \eqref{eq:imex44} for $\eps=10^{-1}$ and $\eps=10^{-3}$ and different sizes of the time step $h_i=2^{-i}, i=0,\dots,5$ and the reference solution is obtained with $h=2^{-7}$. We observe lines of slope $2$ which confirms the theoretical order two of accuracy of the scheme (Theorem \ref{th:conv}). In the stiff case $(\eps=10^{-3})$, the method uses $s=4$ to calculate the reference solution and $s=40, 28, 20, 14, 10, 7$ respectively for the different time steps used to illustrate the convergence,  these values coincide with the theoretical values that can be obtained using \eqref{eq:defsnumrkc}. Analogously to the case of stiff ODEs, the cost of scheme \eqref{eq:rkcoc}-\eqref{eq:rkcda} is $\bigo(\eps^{-\frac12})$ function evaluations of $f$, while using Euler method with its double adjoint would cost $\bigo(\eps^{-1})$.

\subsection{Optimal control of Burgers equation}
\label{sec:burger}
To illustrate the performance of the new method, we consider the following optimal control problem of a nonlinear diffusion advection PDE corresponding to the Burgers equation
\begin{equation}\label{eq:burger}
	\begin{split}
		\min_{u\in L^2([0,T];L^2(\Omega))} &J(u)=\frac12 \|y(T)-y^{target}\|_{L^2(\Omega)}^2 + \frac{\alpha}{2}\int_{0}^{T}\|u(t)\|_{L^2(\Omega)}^2\\
		&\text{subject to}\\
		&\partial_t y(t,x)=\mu\Delta y(t,x)-\frac\nu2\partial_x(y^2(t,x))+u(t,x)\quad \text{in } (0,T)\times\Omega,\\
		&y(0,x)=g(x)\quad\text{in } \Omega,\\
		&y(t,x)=0\quad \text{on } \partial \Omega,
	\end{split}
\end{equation}
where $\mu,\nu>0$, in dimension $d=1$ with domain $\Omega=(0,1)$ and the final time is given by $T=2.5$. Here the control $u$ is a part of the source that we want to adjust in order to achieve a given final state $y^{target}:\Omega\to\IR$.

We use a standard central finite difference space discretization for the state equation, and the trapezoid rule to discretize in space the norm $L^2(\Omega)$. We consider $M+2$ points in space $x_m=m\Delta x$, with grid mesh size $\Delta x=\frac1{M+1}$, and we denote by $y_m(t)$ the approximation to $y(t,x_m)$, and define the vector $Y(t)=(y_0(t),y_1(t),\dots,y_{M+1}(t))\in \IR^{M+2}$. Similar notations are used for $U$ and $P$. We obtain the following  optimal control problem semi discretized in space,
\begin{equation}\label{eq:PDEdiscr}
	\begin{split}
		\min_{U(t)\in\IR^{M+2}} \quad &\Psi(c(T),Y(T))=\frac1{2(M+1)} \sum_{m=0}^{M+1}{}^{'}(y_m(T)-y^{target}(x_m))^2+\alpha c(T)\\
		&\text{subject to}\\
		\dot c(t)&=\frac1{2(M+1)} \sum_{m=0}^{M+1}{}^{'}u_m^2(t),\quad c(0)=0,\\
		\dot y_m(t)&=F_m(U(t),Y(t)):=\frac{\mu}{\Delta x^2}(y_{m+1}-2y_m+y_{m-1})\\&\phantom{=F_m(U(t),Y(t)):}-\frac\nu{4\Delta x^2}(y^2_{m+1}-y^2_{m-1})+u_m,\\ 
		y_m(0)&=g(x_m),~m=0,\dots,M+1,
	\end{split}
\end{equation}
where $m=0,\dots M+1$ and the primed sum denotes a normal sum where the first and the last term are divided by $2$ and we define $y_0=y_{M+1}=0$ to take into account the homogeneous Dirichlet boundary conditions. The function $F:\IR^{M+2}\to\R^{M+2}$ with components $F_m:\R^{M+2}\to\R$ is obtained from the standard central finite difference discretization of the right hand side of the state equation \eqref{eq:burger}, and adapted to the boundary conditions.
The corresponding adjoint system is
\begin{equation}
	\begin{split}
		\dot p_c(t)&=0,\quad p_c(T)=\nabla_c\Psi=\alpha,\\
		\dot P(t)&=-\nabla_YF(U(t),Y(t))P,
	\end{split}
\end{equation} 
where 
$P$ is a vector of length $M+2$ containing the costate values $p_m$, $m=0,\dots, M+1$. In all our experiments we take $\mu=0.1$, $\nu=0.02$, $g(x)=\frac32x(1-x)^2$, and $y^{target}(x)=\frac12\sin(10x)(1-x)$.

In Figure \ref{fig:burger} we plot the optimal control function (Fig.\ts\ref{fig:burger}b) and the corresponding state function (Fig.\ts\ref{fig:burger}a) obtained using scheme \eqref{eq:rkcoc}-\eqref{eq:rkcda}. When we use a small value for $\alpha$ in the model, we allow larger control values and thus a final state very close to the target (Fig.\ts\ref{fig:burger}c), otherwise the control will be more limited and then the final state will not be that close to the target (Fig.\ts\ref{fig:burger}d). Note that the method required $s=24$ stages for $\Delta x=1/100$ and $\Delta t=2.5/30$, while using an Euler method with its double adjoint would require $\Delta t\leq \Delta t_{max,Euler} :=\Delta x^2/2$ at most. 
Hence, the standard Euler method would be $\Delta t/(s\Delta t_{max,Euler})\simeq 70$ times more expensive in terms of number of function evaluations for $\Delta x=1/100$, a factor that grows arbitrarily as $\Delta x \rightarrow 0$.
\begin{figure}[t]
	\centering
	\begin{subfigure}{0.49\linewidth}
		\includegraphics[width=\linewidth]{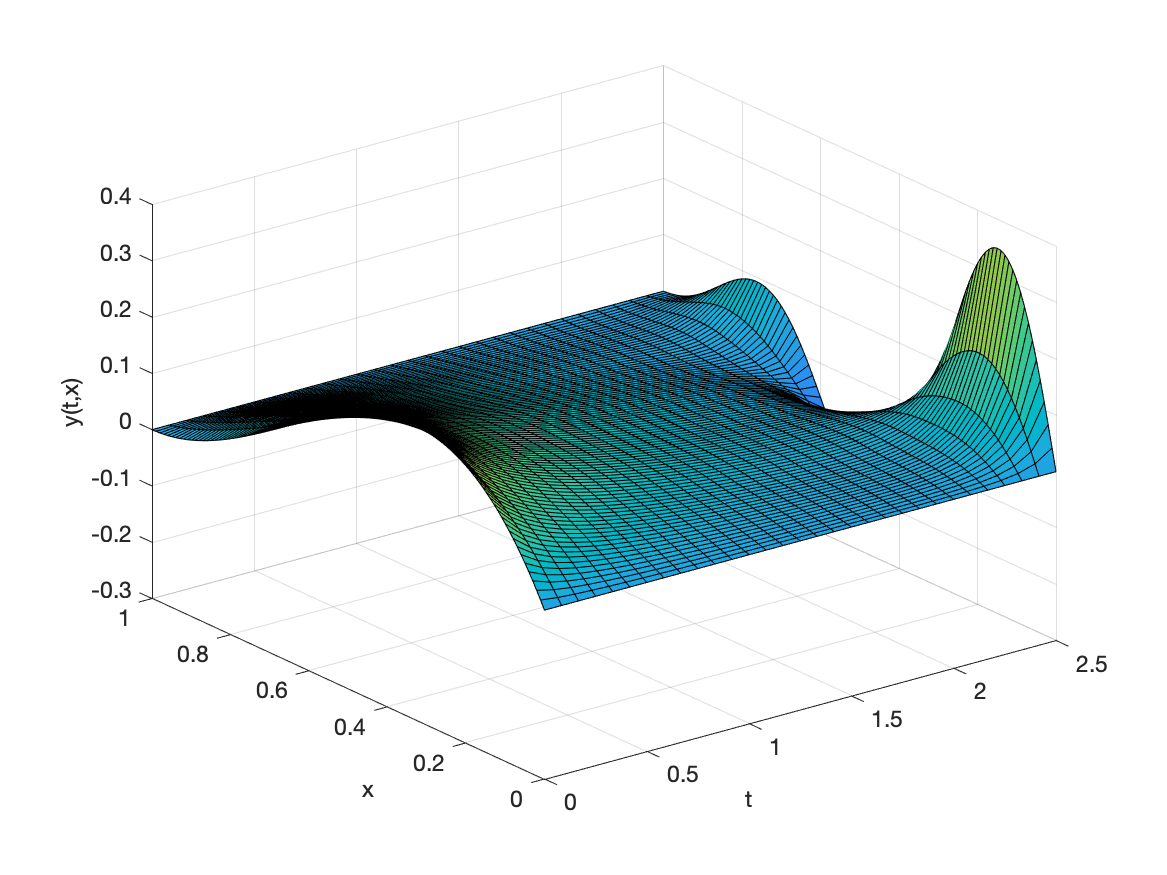}
		\caption{State.}
		\label{fig:state}
	\end{subfigure}
	\begin{subfigure}{0.49\linewidth}
		\includegraphics[width=\linewidth]{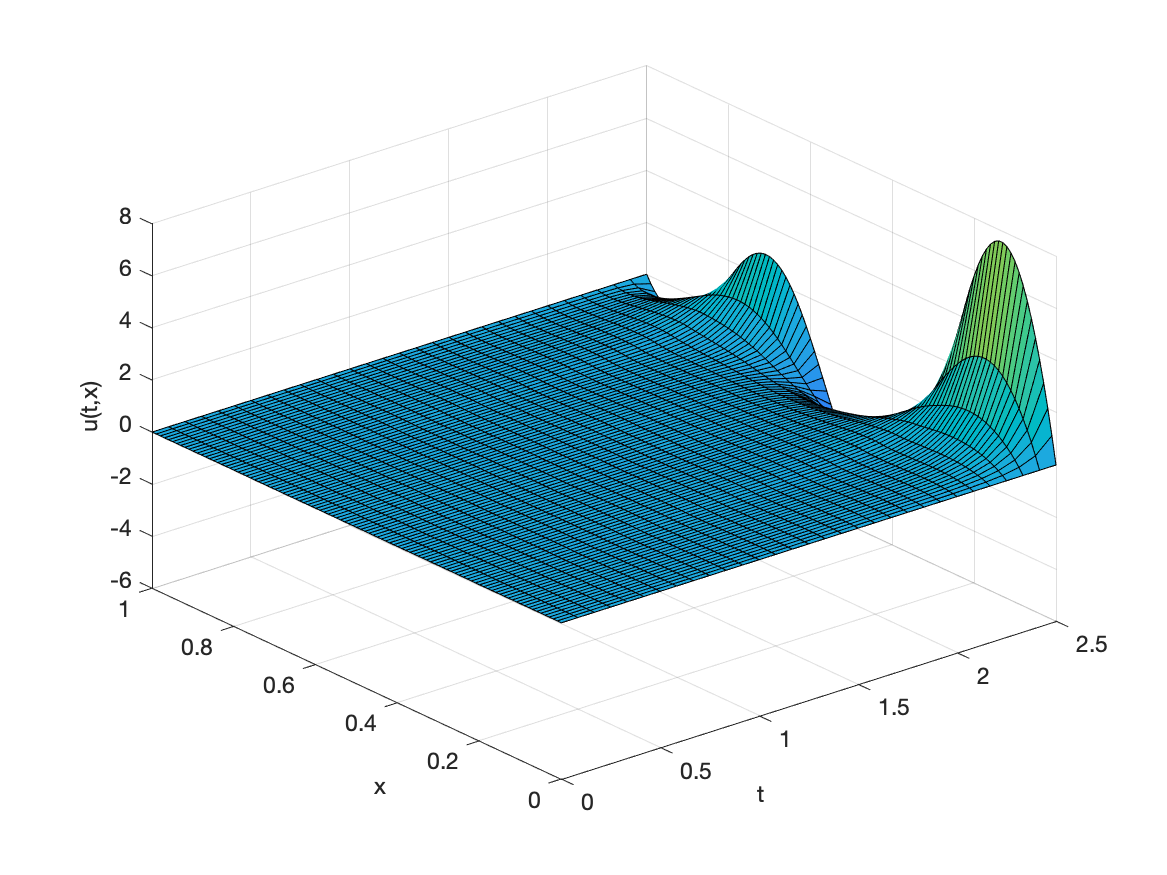}
		\caption{Optimal control}
		\label{fig:control}
	\end{subfigure}\\
	\begin{subfigure}{0.49\linewidth}
		\includegraphics[width=\linewidth]{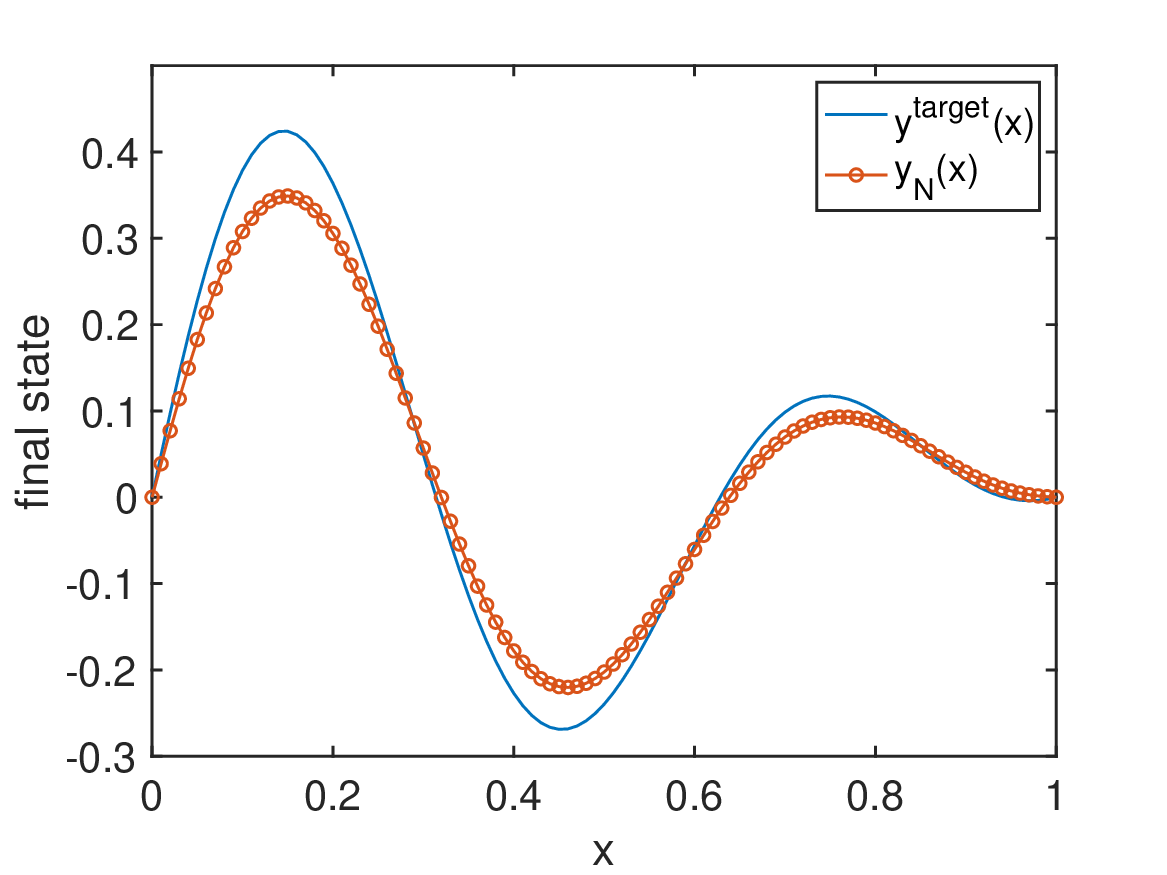}
		\caption{Final and target states for $\alpha=0.01$.}
		\label{fig:target1}
	\end{subfigure}
	\begin{subfigure}{0.49\linewidth}
		\includegraphics[width=\linewidth]{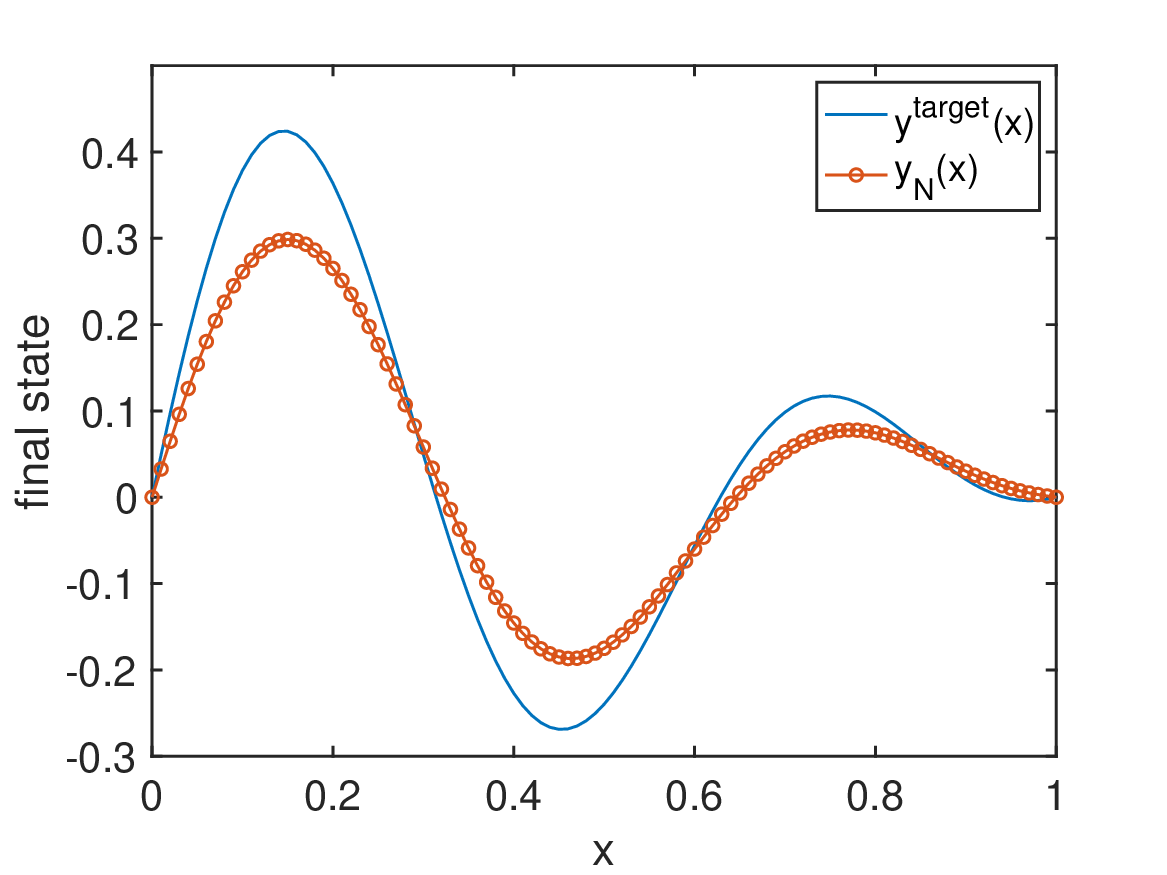}
		\caption{Final and target states for $\alpha=0.02$.}
		\label{fig:conv2}
	\end{subfigure}
	\caption{State, final state, and control, of problem \eqref{eq:burger}. Figures (a), (b), and (c) are obtained using $\Delta x=1/100,~ \Delta t=T/30,$ and $s=24$ stages,  and $\alpha=0.01$. Figure (d) uses the same $\Delta x,~g$ and  $y^{target}$ but $\alpha=0.02$.}
	\label{fig:burger}
\end{figure}
\begin{figure}[t]
	\centering
	\begin{subfigure}{0.49\linewidth}
		\includegraphics[width=\linewidth]{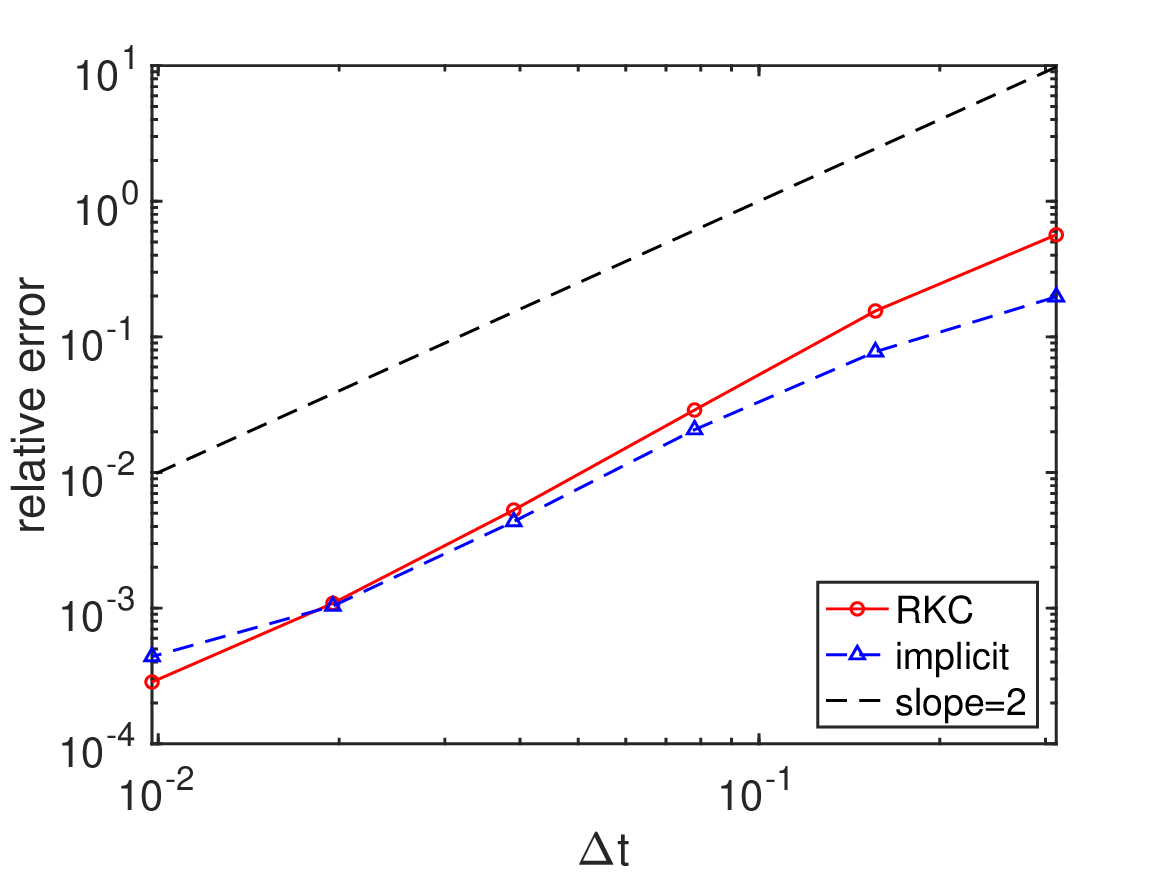}
		\caption{Error in the state.}
		\label{fig:convstate}
	\end{subfigure}
	\begin{subfigure}{0.49\linewidth}
		\includegraphics[width=\linewidth]{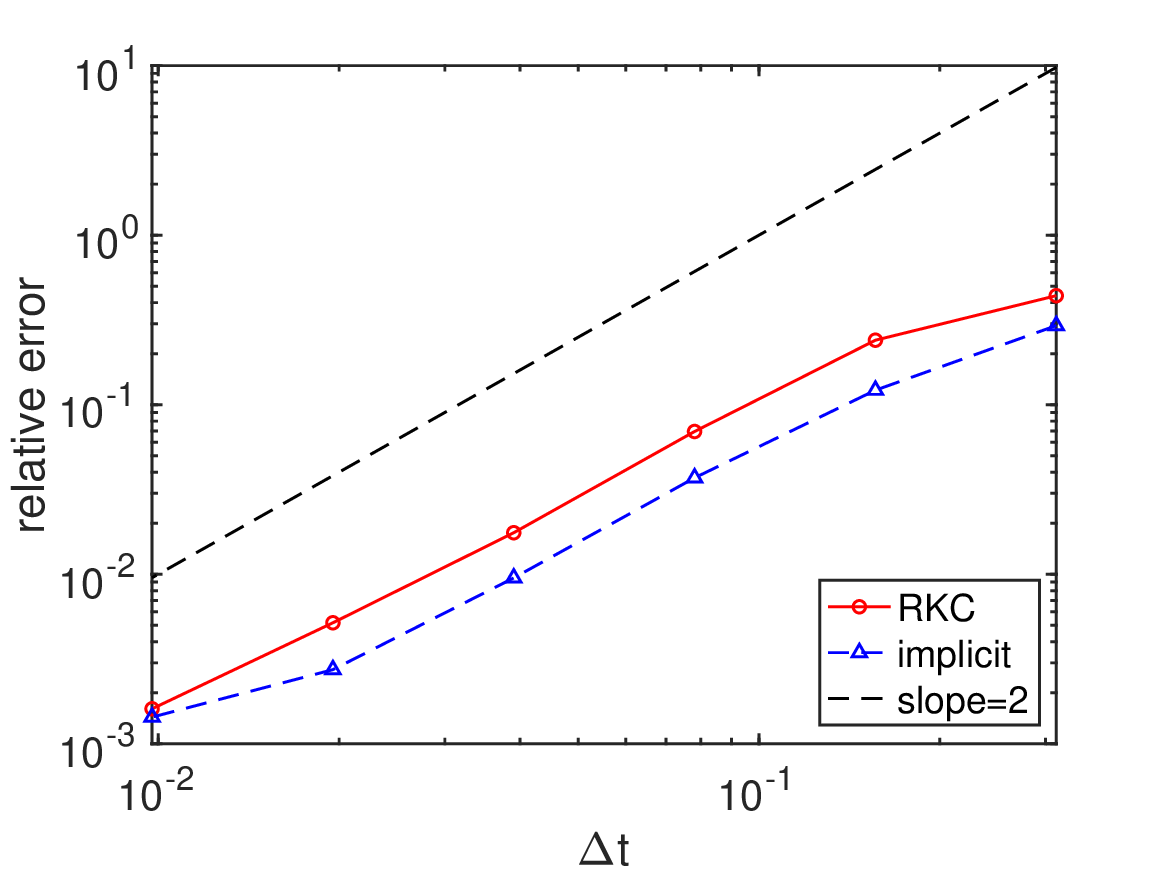}
		\caption{Error in the control.}
		\label{fig:convcontrol}
	\end{subfigure}
	\caption{Convergence plot of the
		RKC method \eqref{eq:rkcoc}-\eqref{eq:rkcda} and the implicit method \eqref{eq:imp} applied to problem \eqref{eq:PDEdiscr}
		for many time steps $\Delta t_i=T/2^{i},~i=3,\dots,8$, $\Delta x=1/100$, and $\alpha=0.02$, 
		The reference solution is obtained using $\Delta t=T/2^{12},~s=3$.}
	\label{fig:burgerconv}
\end{figure}

In Figure \ref{fig:burgerconv}, we plot the convergence curves for the state and control functions of the new RKC method \eqref{eq:rkcoc}-\eqref{eq:rkcda}
applied to the diffusion problem discretized in space \eqref{eq:PDEdiscr}, where the number of stages $s$ is computed adaptively using \eqref{eq:defsnumrkc}. We recover again lines of slope two, which corroborate the 
order two of the method. Although our convergence analysis is valid only in finite dimensions (Theorem \ref{th:conv}), this suggests that the convergence of order two persists in the PDE case.
For comparison, we also included the results for the following standard diagonally implicit Runge-Kutta method of order two, inspired from \cite[Table 5.1]{HPS13} in the context of stiff optimal control problems, and given by the following Butcher tableau where $\gamma=1-\sqrt2/2$ (making the method L-stable),
\begin{equation}\label{eq:imp}
	\begin{array}
		{c|cc}
		&\gamma \\
		& 1-2\gamma & \gamma \\
		\hline
		& 1/2 & 1/2
	\end{array}
\end{equation}
Although for a fixed timestep, the second order implicit method \eqref{eq:imp} appears about two times more accurate than the RKC method  \eqref{eq:rkcoc}-\eqref{eq:rkcda} for the control and almost of the same accuracy for the state, 
we emphasize that these convergence plots do not take into account the extra cost of the implicitness of method \eqref{eq:imp}. 
Indeed, the cost and difficulty of the implementation of the implicit methods (nonlinear iterations, preconditioners, etc.)
would typically deteriorate in larger dimensions and for a nonlinear diffusion operator, as it is already the case for initial value PDEs \cite{Abd13c}, while~as an explicit stabilized scheme, the RKC method \eqref{eq:rkcoc}-\eqref{eq:rkcda} can be conveniently implemented in the spirit of the simplest explicit Euler method.\\

\noindent \textbf{Acknowledgment.} This work was partially supported by the Swiss National Science Foundation, grants No.200020\_184614, No.200021\_162404 and No.200020\_178752.

\bibliographystyle{abbrv}
\bibliography{HLW,complete,abd_biblio}
\end{document}